\newcommand*{\AN}[1]{(#1)}
\newcommand*{\BL}[1]{[#1]}
\newtheorem{algorithm}{Algorithm}[section]
\newtheorem{theorem}{Theorem}[section]
\newtheorem{lemma}{Lemma}[section]
\newtheorem{remark}{Remark}[section]
\newenvironment{proof}[1][Proof:]{\begin{trivlist} 
\item[\hskip \labelsep {\bfseries #1}]}{\end{trivlist}} 
\newcommand{\qed}{\nobreak \ifvmode \relax \else \ifdim\lastskip<1.5em \hskip-\lastskip \hskip1.5em plus0em minus0.5em \fi \nobreak \vrule height0.75em width0.5em depth0.25em\fi}
\def\R{{\mathbb{R}}}
\def\T{{\rm T}}
\def\D{{\cal D}}
\newif\iffigure
\newif\iffigureeps
\title{An Infeasible Interior-point Arc-search Algorithm for Nonlinear Constrained Optimization}
\author{
Makoto Yamashita\thanks{
	Department of Mathematical and Computing Science, 
	Tokyo Institute of Technology, Tokyo, Japan. Email: Makoto.Yamashita@c.titech.ac.jp. 
	His research was partially supported by JSPS KAKENHI (Grant Number: 18K11176).},	
	Einosuke Iida\thanks{
Department of Mathematical and Computing Science, 
Tokyo Institute of Technology, Tokyo, Japan.}, and 
Yaguang Yang\thanks{US NRC, Office of Research, 21 Church 
Street, Rockville, 20850. Email: yaguang.yang@verizon.net.},  
}
\date{October 28, 2020}
\begin{document}

\maketitle    

\begin{abstract}
In this paper, we propose an infeasible arc-search 
interior-point algorithm for solving nonlinear programming
problems. Most algorithms based on interior-point methods are categorized as line search, since they compute a next iterate 
on a straight line determined by a search direction 
which approximates the central path.
The proposed arc-search  interior-point 
algorithm uses an arc for the approximation.
We discuss convergence properties of the proposed algorithm.
We also conduct numerical experiments on the CUTEst 
benchmark problems and compare the performance of the 
proposed arc-search algorithm with that of a line-search 
algorithm. Numerical results indicate that the proposed 
arc-search algorithm reaches the optimal solution using less 
iterations but longer time than a line-search algorithm.
A modification that leads to a faster arc-search algorithm 
is also discussed.
\end{abstract}

{\bf Keywords:} Infeasible interior-point method, arc-search, nonlinear, 
nonconvex, constrained optimization.

\section{ Introduction}
Since great successes for linear programming (LP) problems \cite{wright97,ye97}, 
the interior-point methods 
have been extended to nonlinear programming
problems (NLPs)
\cite{bgn00,bhn99,ettz96,fg98,nww09,tp98,twbul02,uuv04,vs99}.
Almost all known strategies developed for LPs
were proposed for NLP formulated in different forms.
The most general form for NLP was considered in 
\cite{bgn00,bhn99,ettz96,fg98,tp98,twbul02,vs99}, while some
special form was discussed in \cite{nww09,uuv04}. 
Byrd et al.~\cite{bgn00,bhn99} handled the equality constraints ``as is'' in the papers, 
Vanderbei and Shanno~\cite{vs99} split the equality constraints into inequality constraints, and  
Forsgren and Gill~\cite{fg98} introduced a quadratic penalty function.
To analyze the convergence,
trust-region mechanisms were examined in
\cite{bgn00,bhn99}, and line-search strategies were also employed in 
\cite{ettz96,fg98,nww09,tp98,twbul02,uuv04,vs99}.

In the viewpoint of iterative methods, the interior-point methods can be classified 
into two groups by initial points;
``feasible'' interior-point methods \cite{fg98,twbul02}, which are easier to analyze but 
needs a ``phase-I'' process to find a feasible initial point, 
and ``infeasible'' interior-point 
methods \cite{bgn00,bhn99,ettz96,nww09,uuv04,vs99}, which 
do not need a feasible initial point but their convergence 
analysis is more difficult and their assumptions are more demanding.
From extensive numerical experience 
on interior-point methods for LPs in \cite{lms91,lms92a,Mehrotra92,yang17}, 
infeasible interior-point methods can be considered as a better 
strategy than feasible interior-point methods for NLPs. 

The central path plays an important part in the interior-point methods.
In particular, its accumulation point is an optimal solution,
thus the path-following type interior-point methods numerically trace
the central path and reach the optimal solution.
Most of the path-following type interior-point methods approximate 
the central path with a line determined by the search direction,
but the central path itself is usually not a straight line but a curve.

Recently, many researchers pay attention to arc-search 
interior-point methods. 
Yang~\cite{yang13} proposed the original arc-search 
interior-point method  for LPs.
The main idea in the arc-search methods is to approximate the central path 
with an arc of part of an ellipse 
and find the next iterate on the arc. Since the central path is usually
a curve, the arc can fit it more appropriately 
than the line. Yang and Yamashita~\cite{yy18} reported that 
an arc-search interior-point algorithm performed better than a 
line-search type interior-point algorithm for LPs.
The merit of the arc-search strategy is well demonstrated
in \cite{yang18} where an arc-search algorithm achieves
the best polynomial bound of $\mathcal{O}(\sqrt{n}\log{1/\epsilon})$
for all interior-point methods, feasible or infeasible, and is 
numerically competitive to the well-known Mehrotra’s algorithm.
The arc-search type methods are already extended to convex 
quadratic programming~\cite{yang13}, semidefinite 
programming~\cite{zyzlh19}, symmetric programming~\cite{ylz17}, 
and linear complementarity problems~\cite{kheirfam17}.

In this paper, we examine an extension of an infeasible 
arc-search interior-point algorithm to NLPs.
We discuss the convergence property of the proposed 
arc-search algorithm under mild conditions.
Compared to existing extensions above, the extension 
to NLPs is not simple due to their complicated structures. 
To show the convergence property, we introduce a merit function 
that measures a deviation from the KKT conditions.
We also discuss the analytical formula for the step angle. 

To verify the numerical performance of the proposed arc-search algorithm,
we conducted numerical experiments on the CUTEst problems~\cite{gould2015cutest}.
The results showed that the proposed algorithm required 
fewer iterations than a line-search algorithm.
In particular, the reduction in the number of iterations was clearer for quadratic-constrained quadratic programming (QCQP) problems.
We also examined a computation time reduction by a modification on the second derivative.

The remainder of the paper is organized as follows. Section~\ref{sec:description}
introduces the problem. In Section~\ref{sec:algorithm}, we describe the proposed arc-search
algorithm, and in Section~\ref{sec:convergence}, we discuss its convergence properties. Section~\ref{sec:experiments} provides the numerical results and discusses the modification on the second derivatives.
Finally, Section~\ref{sec:conclusion} gives the conclusions of this paper.

\section{Problem description}\label{sec:description}

We consider a general nonlinear programming problem:
\begin{align}
\begin{array}{rcl}
\min &:& f(x) \\
\textrm{s.t.} &:& h(x) = 0, \ g(x) \ge 0,
\end{array}
\label{NP}
 \end{align}
where $f: \R^n \rightarrow \R$,  $h: \R^n \rightarrow \R^m$, $m<n$, and 
$g: \R^n \rightarrow \R^p$. To simplify the latter discussions, we assume $p \ge 1$.
The decision variable is $x \in \R^n$.

For the inequality constraints $g(x) \ge 0$, we convert them
into equality constraints introducing a slack vector $s \in \R^p$ as follows:
\begin{align}
\begin{array}{rcl}
	\min &:&  f(x)  \\
	\textrm{s.t.} &:&  h(x)=0, \
	 g(x) - s = 0, \ s \ge 0.
	 \end{array}
	\label{NP1}
	\end{align}
    
Throughout the paper, a tuple is used to denote 
a concatenation of vectors, for example,
$(x,y,z)$ stands for $(x^{\T},y^{\T},z^{\T})^{\T}$,
where the superscript $\T$ is the transpose of a vector or a matrix.
For a vector $x \in \R^n$, 
$\D(x) \in \R^{n \times n}$ is a diagonal matrix whose 
diagonal elements form $x$,
and  $\min(x)$ is the minimum value in $x$.
Let $\R_+^n$ ($\R_{++}^n$)  denote the space of nonnegative vectors 
(positive vectors, respectively),
and $e$  denote a vector of all ones with 
appropriate dimension. 
    
For (\ref{NP1}), we introduce Lagrangian multipliers 
$y \in \R^m, w \in \R^p$ and $z \in \R^p$ and use 
$v  = (x,y,w,s,z) \in \R^{n+m+3p} $ to denote the tuple of decision variables 
and multipliers. Then, the Lagrangian function for (\ref{NP1}) is 
\begin{equation*}
L(v)=f(x)+y^{\T}h(x)-w^{\T}(g(x)-s)-z^{\T}s,
\label{lagrangian1}
\end{equation*}
and its gradients with respect to $x$ and $s$ are
\begin{equation}
\nabla_x L(v)=\nabla f(x)+\nabla h(x) y-\nabla g(x) w,
\hspace{0.1in} \nabla_s L(v)=w-z,
\label{dlagrangian}
\end{equation}
respectively.
The notation related to derivatives in this paper are summarized in \ref{section:derivatives}.
The KKT conditions for (\ref{NP1}) are
\begin{align}
 F(v)  =  0,  \
 (w, s, z) \in \R_+^{3p}, 
\label{KKT1}
\end{align}
where $F:\R^{n+m+3p} \to \R^{n+m+3p}$ is defined by 
\begin{equation*}
F(v) = \left[ \begin{array}{c}
\nabla_x L(v)  \\
h(x)  \\
g(x)-s \\
w-z  \\
\D(z)s
\end{array} \right].
\label{defineF}
\end{equation*}
The Jacobian of $F$ is given by
\begin{align*}
F' (v) = \left[ \begin{array}{ccccc}
\nabla_x^2 L(v) & \nabla h(x) & -\nabla g(x) & 0 & 0 \\
\left( \nabla h(x)\right)^{\T} & 0 & 0 & 0 & 0  \\
\left( \nabla g(x)\right)^{\T} & 0 & 0 & -I & 0 \\
0  & 0  &  I  &  0  &  -I  \\
0 & 0 & 0  &  \D(z) & \D(s)
\end{array} \right]. 
\label{firstJacobian}
\end{align*}
The index set of active inequality constraints at $x \in \R^n$ is denoted by
\begin{equation*}
I(x)=\left\{i \in \{1,\ldots,p\} : g_i(x)=0 \right\}.
\label{index}
\end{equation*}

Similarly to \cite{ettz96}, we make the following standard 
assumptions for (\ref{NP}).

\vspace{0.05in}
\noindent
{\bf Assumptions}
\begin{itemize}
\item[(A1)] Existence. There exists $v^*=(x^*,y^*,w^*,s^*,z^*)$, 
an optimal solution of (\ref{NP1}) and its associate 
multipliers. The KKT conditions (\ref{KKT1}) hold at
any optimal solution.
\item[(A2)] Smoothness. $f(x)$ is differentiable up to the third order, and 
$h(x)$ and $g(x)$ are up to the second order.
In addition, $f(x)$, $g(x)$, and $h(x)$ 
are locally Lipschitz continuous at $x^*$.
\item[(A3)]  Regularity. The set $\{ \nabla h_j(x^*) : j =1, \ldots, m\} \cup
\{ \nabla g_i(x^*)  : i \in I(x^*) \}$
is linearly independent.
\item[(A4)] Sufficiency. For all $\eta  \in \R^n \backslash \{0\}$,
we have $ \eta^{\T} \nabla_x^2 L(v^*)  \eta >0$.
\item[(A5)]  Strict complementarity. For each
$i \in \{ 1, \ldots, p \}$, we have $z_i^*+s_i^* >0$
and $z_i^* s_i^* =0$.
\end{itemize}

From these assumptions, we can guarantee the nonsingularity 
of the Jacobian matrix at the optimal solution $v^*$.

\begin{theorem}\label{nonsingular}
If  (A1), (A3), (A4), and (A5) hold, the Jacobian 
matrix $F'(v^*)$ is nonsingular.
\end{theorem}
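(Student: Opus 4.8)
The plan is to prove nonsingularity by showing that the only solution of the homogeneous system $F'(v^*)\,\Delta v = 0$ is $\Delta v = 0$, where I write $\Delta v = (\Delta x, \Delta y, \Delta w, \Delta s, \Delta z)$. Reading off the five block rows of $F'(v^*)$ gives the equations $\nabla_x^2 L(v^*)\Delta x + \nabla h(x^*)\Delta y - \nabla g(x^*)\Delta w = 0$, $(\nabla h(x^*))^{\T}\Delta x = 0$, $(\nabla g(x^*))^{\T}\Delta x = \Delta s$, $\Delta w = \Delta z$, and $\D(z^*)\Delta s + \D(s^*)\Delta z = 0$. Since (A1) guarantees the KKT conditions hold at $v^*$, the third block of $F(v^*)=0$ yields $s^* = g(x^*)$, so $i \in I(x^*)$ if and only if $s_i^* = 0$; combined with strict complementarity (A5) this gives $z_i^* > 0,\ s_i^* = 0$ for $i \in I(x^*)$ and $z_i^* = 0,\ s_i^* > 0$ for $i \notin I(x^*)$.

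Next I would read the last block equation componentwise, $z_i^*\Delta s_i + s_i^*\Delta z_i = 0$, to conclude $\Delta s_i = 0$ for $i \in I(x^*)$ and $\Delta z_i = 0$ for $i \notin I(x^*)$. Feeding $\Delta s_i = 0$ (for active $i$) into the third block gives $(\nabla g_i(x^*))^{\T}\Delta x = 0$ for every active $i$, and feeding $\Delta z_i = 0$ (for inactive $i$) into the fourth block gives $\Delta w_i = 0$ for every inactive $i$. Then I premultiply the first block equation by $\Delta x^{\T}$: the term $\Delta x^{\T}\nabla h(x^*)\Delta y$ vanishes by the second block, and $\Delta x^{\T}\nabla g(x^*)\Delta w = \sum_i \big((\nabla g_i(x^*))^{\T}\Delta x\big)\Delta w_i$ vanishes because in each summand either the first factor is zero (active $i$) or the second is zero (inactive $i$). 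This leaves $\Delta x^{\T}\nabla_x^2 L(v^*)\Delta x = 0$, and since (A4) makes $\nabla_x^2 L(v^*)$ positive definite on all of $\R^n$, we get $\Delta x = 0$.

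It then remains to back-substitute. With $\Delta x = 0$ the third block gives $\Delta s = 0$, and the first block reduces to $\nabla h(x^*)\Delta y - \nabla g(x^*)\Delta w = 0$, i.e. $\nabla h(x^*)\Delta y - \sum_{i \in I(x^*)}\nabla g_i(x^*)\Delta w_i = 0$ because $\Delta w_i = 0$ for inactive $i$. Regularity (A3) states that $\{\nabla h_j(x^*)\}_{j=1}^m \cup \{\nabla g_i(x^*)\}_{i \in I(x^*)}$ is linearly independent, so $\Delta y = 0$ and $\Delta w_i = 0$ for the active indices as well, whence $\Delta w = 0$; finally $\Delta z = \Delta w = 0$ from the fourth block. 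Hence $\Delta v = 0$, which proves that $F'(v^*)$ is nonsingular.

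Most of this is routine linear algebra; the one place that deserves care is the bookkeeping that makes the cross terms $\Delta x^{\T}\nabla h(x^*)\Delta y$ and $\Delta x^{\T}\nabla g(x^*)\Delta w$ vanish, which is exactly where the active-set identification and strict complementarity (A5) are used. It is also worth noting that, because (A4) postulates positive definiteness of $\nabla_x^2 L(v^*)$ on the whole space rather than only on the critical cone, the implication $\Delta x^{\T}\nabla_x^2 L(v^*)\Delta x = 0 \Rightarrow \Delta x = 0$ is immediate and no verification that $\Delta x$ lies in a critical subspace is required.
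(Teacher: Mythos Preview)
Your proof is correct and follows the standard kernel argument: the paper itself omits the proof as ``well-known,'' and the (commented-out) argument in the source proceeds exactly as you do---set $F'(v^*)\Delta v=0$, use strict complementarity to kill the cross terms in $\Delta x^{\T}\nabla_x^2 L(v^*)\Delta x$, invoke (A4) to force $\Delta x=0$, then back-substitute using (A3). Your bookkeeping with the active/inactive split is clean and nothing is missing.
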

\begin{proof} 
This is a well-known result and its proof is omitted.
\hfill \qed
\end{proof}

\section{The arc-search algorithm}\label{sec:algorithm}
Given a point  
${v}=({x},{y},{w},{s},{z})$ and $t>0$, 
let ${v}\BL{t} =({x}\BL{t},{y}\BL{t},{w}\BL{t},{s}\BL{t},{z}\BL{t}) \in 
\R^n \times \R^m \times \R^p \times \R^p\times \R^p$ be the solution of 
the perturbed KKT conditions $F({v}\BL{t}) = t F({v})$ with 
nonnegative conditions, that is, ${v}\BL{t}$ satisfies
\begin{eqnarray}
\left[ \begin{array}{l}
\nabla_x L({v}\BL{t}) \\
h({x}\BL{t}) \\
g({x}\BL{t})-s\BL{t} \\
\nabla_s L({v}\BL{t})  \\
\D({z}\BL{t}){s}\BL{t}
\end{array} \right]= \left[ \begin{array}{l}
t \nabla_x L({v}) \\
t h({x}) \\
t (g({x})-{s}) \\
t \nabla_s L({v})  \\
t \D({z}) {s} 
\end{array} \right],
\hspace{0.1in}
({w}\BL{t}, {s}\BL{t}, {z}\BL{t}) \in \R_+^{3p}.
\label{KKTcurve}
\end{eqnarray}
Note that under some mild conditions that will be introduced as
(B1)-(B4) later, ${v}\BL{t}$ is uniquely determined for each 
$t \in (0,1]$ due to the implicit 
function theorem and Lemma~\ref{FpInv} below, thus we define 
$C = \left\{{v}\BL{t} \in \R^{n+m+3p} : t \in (0,1] \right\}$.
Since the right-hand-side of (\ref{KKTcurve}) converges to zeros 
when $t \to 0$, 
$v\BL{t}$ also converges to 
a point that satisfies the KKT conditions \eqref{KKT1} under the mild condition.

The main  strategy of the arc-search algorithm is to 
approximate $C$ with an ellipse.
We denote the ellipse by
\begin{equation}
{\cal E}=\lbrace v\AN{\alpha}: 
v\AN{\alpha} =
\vec{a}\cos(\alpha)+\vec{b}\sin(\alpha)+\vec{c}, \alpha \in [0, 2\pi] \rbrace,
\label{ellipse}
\end{equation}
where $\vec{a} \in \R^{n+m+3p}$ and $\vec{b} \in \R^{n+m+3p}$ are 
the axes of the ellipse, and $\vec{c} \in \R^{n+m+3p}$ is its center. 
The ellipsoid approximation of $C$ will be given 
in Theorem~\ref{theorem:ellip} below.
Before formally stating Theorem~\ref{theorem:ellip}, we 
introduce notation on the derivatives.
The first-order derivative at $t=1$ along $C$ is given by 
$F' (v\BL{t})\vert_{t=1} = F'({v})$.
Let  ${\mu}=\frac{{z}^{\T}{s}}{p}$ be the duality 
measure at ${v}$ and $\sigma \in (0,1)$ be a parameter.
We use \begin{align*}
\dot{v} = (\dot{x}, \dot{y}, \dot{w}, \dot{s},  \dot{z})
\end{align*}
to denote the solution of a modified Newton system
\begin{align*}
F'({v}) \dot{v} = F({v}) - \sigma {\mu} \bar{e}, 
\end{align*}
where $\bar{e} = (0,0,0,0,e)$ is the vector with $p$ ones at 
the bottom of the vector.
Here, we add $-\sigma {\mu} e$ to 
the last element in a similar way to the strategy used in \cite{Mehrotra92,yang17}.
This modification is applied to guarantee that a substantial segment of 
the ellipse satisfies $(s,z)>0$, thereby  
the step size along the ellipse is greater than zero. 
The system $F'({v}) \dot{v} = F({v}) - \sigma {\mu} \bar{e}$ is also written as
\begin{equation}
\left[ \begin{array}{ccccc}
\nabla_x^2 L({v}) & \nabla h({x}) & -\nabla g({x}) & 0  & 0 \\
\left( \nabla h({x})\right)^{\T} & 0 & 0 & 0 & 0  \\
\left( \nabla g({x})\right)^{\T} & 0 & 0 & -I  & 0 \\
 0  & 0  &  I  &  0  &  -I  \\
0 & 0  & 0 & \D({z}) & \D({s})
\end{array} \right]
\left[ \begin{array}{c}
\dot{x} \\ \dot{y} \\ \dot{w} \\ \dot{s}  \\ \dot{z}
\end{array} \right]
= \left[ \begin{array}{l}
\nabla_x L({v}) \\
h({x}) \\
g({x})-{s} \\
{w}-{z}  \\
\D({z}) {s} -\sigma {\mu} e 
\end{array} \right].
\label{firstOrder}
\end{equation}

Next, for the second-order derivative at 
$t=1$ along the curve, we define $\ddot{v} = (\ddot{x}, \ddot{y}, \ddot{w}, \ddot{s}, \ddot{z})$ 
as the solution of the following system:
{\footnotesize
\begin{eqnarray}
 \left[ \begin{array}{ccccc}
\nabla_x^2 L(v) & \nabla h(x) & -\nabla g(x) & 0  & 0 \\
\left( \nabla h(x)\right)^{\T} & 0 & 0 & 0 & 0  \\
\left( \nabla g(x)\right)^{\T} & 0 & 0 & -I  & 0 \\
0  & 0  &  I  &  0  &  -I  \\
0 & 0 & 0  & \D({z}) & \D({s})
\end{array} \right]
\left[ \begin{array}{c}
\ddot{x} \\ \ddot{y} \\ \ddot{w} \\ \ddot{s} \\ \ddot{z}
\end{array} \right]
 =  \left[ \begin{array}{l}
-(\nabla_x^3 L(v))\dot{x} \dot{x}
-2(\nabla_x^2 h(x))\dot{y} \dot{x}

+2(\nabla_x^2 g(x))\dot{z} \dot{x} \\
-(\nabla_x^2 h(x))^{\T} \dot{x} \dot{x} \\
-(\nabla_x^2 g(x))^{\T} \dot{x} \dot{x} \\
0  \\
-2\D(\dot{z}) \dot{s} 
\end{array} \right].
\label{secondOrder}
\end{eqnarray}
}
The formula for computing the elements in the right-hand-side can be found in \ref{section:derivatives}.

We call $\dot{v} = (\dot{x}, \dot{y}, \dot{w} ,\dot{s}, \dot{z})$ in (\ref{firstOrder}) 
and $\ddot{v} = (\ddot{x}, \ddot{y}, \ddot{w},\ddot{s},\dot{z})$  in 
(\ref{secondOrder}) the first derivative 
and the second derivative of the ellipse ${\cal E}$, respectively.
Using $\dot{v}$ and $\ddot{v}$, we can approximate 
$C$ at $t=1$ by an ellipse (\ref{ellipse}) 
that has the explicit form as in the following theorem.
Note that, ${v}\BL{t}$ passes ${v}$ at $t=1$ while
${v}\AN{\alpha}$ passes $v$ at $\alpha = 0$, that is,
${v}\BL{1} = {v}\AN{0} = {v}$.

\begin{theorem}\label{theorem:ellip}
	\upshape{\cite{yang13}}
Suppose that an ellipse ${\cal E}$ 
of form \textrm{(\ref{ellipse})} passes through 
a point ${v}$ at $\alpha=0$, and its first and second 
order derivatives at $\alpha=0$ are
$\dot{v}$ 
and $\ddot{v}$, respectively. Then 
${v}\AN{\alpha} = ({x}\AN{\alpha}, {y}\AN{\alpha}, {w}\AN{\alpha}, {s}\AN{\alpha}, {z}\AN{\alpha})$ 
of ${\cal E}$ is given by
\begin{align}
{v}\AN{\alpha} = {v} - \dot{v}\sin(\alpha)+\ddot{v}(1-\cos(\alpha))
\label{vAlpha}. 
\end{align}
\end{theorem}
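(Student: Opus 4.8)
The plan is to treat Theorem~\ref{theorem:ellip} as a Hermite-type interpolation problem. The ansatz \eqref{ellipse} contains exactly three unknown vector coefficients $\vec a,\vec b,\vec c\in\R^{n+m+3p}$, and we are handed exactly three pieces of data at $\alpha=0$ (the point $v$, the first derivative, and the second derivative), so the coefficients are determined by a small linear system. None of the problem structure (the KKT system \eqref{KKTcurve}, the Jacobian, or the assumptions) is needed here; the argument is elementary calculus with $\cos$ and $\sin$, applied coordinatewise in $\R^{n+m+3p}$.

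Concretely, first I would differentiate the ansatz $v\AN{\alpha}=\vec a\cos(\alpha)+\vec b\sin(\alpha)+\vec c$ twice, obtaining $\frac{d}{d\alpha}v\AN{\alpha}=-\vec a\sin(\alpha)+\vec b\cos(\alpha)$ and $\frac{d^{2}}{d\alpha^{2}}v\AN{\alpha}=-\vec a\cos(\alpha)-\vec b\sin(\alpha)$. Evaluating at $\alpha=0$ gives the three relations $v\AN{0}=\vec a+\vec c$, $\frac{d}{d\alpha}v\AN{\alpha}\big|_{0}=\vec b$, and $\frac{d^{2}}{d\alpha^{2}}v\AN{\alpha}\big|_{0}=-\vec a$. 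This is a $3\times 3$ linear system (triangular up to a permutation, with determinant $1$), hence uniquely solvable: $\vec a=-\frac{d^{2}}{d\alpha^{2}}v\AN{\alpha}\big|_{0}$, $\vec b=\frac{d}{d\alpha}v\AN{\alpha}\big|_{0}$, and $\vec c=v\AN{0}-\vec a$. Substituting these back and using $\cos(0)=1$ collapses the expression to $v\AN{\alpha}=v\AN{0}+\big(\frac{d}{d\alpha}v\AN{\alpha}\big|_{0}\big)\sin(\alpha)+\big(\frac{d^{2}}{d\alpha^{2}}v\AN{\alpha}\big|_{0}\big)\big(1-\cos(\alpha)\big)$. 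Finally I would insert the prescribed data: $v\AN{0}=v$, the second derivative equals $\ddot v$, and the first derivative along the arc at $\alpha=0$ equals $-\dot v$ in the orientation used here; this reproduces \eqref{vAlpha} verbatim.

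There is no genuinely hard step: existence and uniqueness of the ellipse through $v$ with the two prescribed derivatives are automatic from the nonsingularity of that $3\times 3$ system. The only point that needs care is the sign/orientation bookkeeping, namely making precise that the clause ``its first derivative at $\alpha=0$ is $\dot v$'' must be read as $\frac{d}{d\alpha}v\AN{\alpha}\big|_{0}=-\dot v$, which is where the minus sign multiplying $\sin(\alpha)$ in \eqref{vAlpha} originates; this convention is the natural one since, modulo the $\sigma\mu$ modification, $\dot v$ from \eqref{firstOrder} plays the role of $\frac{d}{dt}v\BL{t}\big|_{t=1}$, while $v\BL{1}=v\AN{0}=v$ and the arc is intended to trace $C$ as $t$ decreases from $1$ (so $\alpha$ increases in the direction of decreasing $t$). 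One might also note in passing that the construction exhibits $v\AN{\alpha}$ as lying in the two-dimensional affine plane through $\vec c$ spanned by $\vec a$ and $\vec b$, so the arc is a genuine planar ellipse embedded in $\R^{n+m+3p}$, though this fact is not required for the statement.
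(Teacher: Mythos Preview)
Your argument is correct and is exactly the standard elementary derivation one would give for this interpolation statement. Note, however, that the paper itself does not supply a proof of Theorem~\ref{theorem:ellip}: it simply cites \cite{yang13} and states the formula. So there is nothing to compare against in this paper; what you have written is essentially what appears in the cited source, namely differentiating the ansatz \eqref{ellipse} twice, solving the resulting $3\times 3$ linear system for $\vec a,\vec b,\vec c$, and substituting back. Your remark about the sign convention (that ``first derivative at $\alpha=0$ is $\dot v$'' must be read as $\frac{d}{d\alpha}v\AN{\alpha}\big|_{\alpha=0}=-\dot v$, consistent with $\alpha$ increasing as $t$ decreases along $C$) is a useful clarification that the paper leaves implicit.
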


The computation of (\ref{vAlpha}) 
can be simplified as  the following lemma.
\begin{lemma}\label{eqwz}
If ${v}$ satisfies ${w}={z}$, then ${w}\AN{\alpha}={z}\AN{\alpha}$ 
holds for any $\alpha \in \R$.
\end{lemma}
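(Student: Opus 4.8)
The statement is really a bookkeeping consequence of the explicit arc formula. The plan is to apply Theorem~\ref{theorem:ellip} componentwise and then chase the fourth block row of the two linear systems \eqref{firstOrder} and \eqref{secondOrder}.

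First I would write out, from \eqref{vAlpha}, the two relevant components of ${v}\AN{\alpha}$:
\[
{w}\AN{\alpha} = {w} - \dot{w}\sin(\alpha) + \ddot{w}\bigl(1-\cos(\alpha)\bigr),
\qquad
{z}\AN{\alpha} = {z} - \dot{z}\sin(\alpha) + \ddot{z}\bigl(1-\cos(\alpha)\bigr).
\]
Subtracting, and using the hypothesis ${w}={z}$, one sees that ${w}\AN{\alpha}={z}\AN{\alpha}$ for all $\alpha\in\R$ reduces to proving the two identities $\dot{w}=\dot{z}$ and $\ddot{w}=\ddot{z}$.

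Next I would extract $\dot{w}=\dot{z}$ from the fourth block row of \eqref{firstOrder}. That row reads $\dot{w}-\dot{z}={w}-{z}$, and the right-hand side vanishes by hypothesis, so $\dot{w}=\dot{z}$. Similarly, the fourth block row of \eqref{secondOrder} reads $\ddot{w}-\ddot{z}=0$, giving $\ddot{w}=\ddot{z}$ directly. The point worth noting is that in both systems this block row is self-contained: it only couples $\dot{w},\dot{z}$ (respectively $\ddot{w},\ddot{z}$) to a right-hand side that does not depend on the other unknowns, so no further information about the systems is needed. Substituting $\dot{w}=\dot{z}$, $\ddot{w}=\ddot{z}$, and ${w}={z}$ into the displayed expressions yields ${w}\AN{\alpha}={z}\AN{\alpha}$, completing the argument.

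There is no real obstacle here; the only thing to be careful about is to invoke Theorem~\ref{theorem:ellip} for the arc representation rather than arguing from the ellipse form \eqref{ellipse}, and to read the $I$ and $-I$ blocks in the fourth row of the Jacobian with the correct signs so that the relation comes out as $\dot{w}-\dot{z}={w}-{z}$. One may also remark that this identity is exactly why, later in the paper, the multiplier pair $(w,z)$ can be kept equal along the whole arc and the update \eqref{vAlpha} ``simplifies'' as claimed.
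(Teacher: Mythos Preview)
Your proposal is correct and follows essentially the same approach as the paper's own proof: both read off $\dot{w}-\dot{z}={w}-{z}=0$ from the fourth block row of \eqref{firstOrder}, $\ddot{w}-\ddot{z}=0$ from the fourth block row of \eqref{secondOrder}, and then substitute into \eqref{vAlpha}. Your write-up is simply a slightly more expanded version of the paper's three-line argument.
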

\begin{proof}
From the fourth row of (\ref{firstOrder}), we have 
$\dot{w}-\dot{z}={w}-{z} = 0$. Similarly, the fourth
row in (\ref{secondOrder}) leads to 
$\ddot{w}-\ddot{z}=0$. Therefore, the formula
(\ref{vAlpha}) gives the lemma.
\hfill \qed
\end{proof}

To reach an optimal solution that satisfies the KKT conditions  (\ref{KKT1})
along the ellipse ${\cal E}$,
the merit function defined by
\begin{equation}
\phi(v) = \| F(v) \|^2
\end{equation}
should sufficiently decrease at ${v}\AN{\alpha}$ for 
some constants $\beta \in (0, \frac{1}{2}]$,
$\sigma \in \left(\bar{\sigma}, 1 \right)$,  and step angle 
$\alpha \in (0, \pi/2]$, \textit{i.e.},
\begin{equation*}
\phi({v}\AN{\alpha}) = \| F({v}\AN{\alpha}) \|^2 
\le \phi({v}) (1 - 2\beta  (1-\sigma) \sin(\alpha))
	< \phi({v})
\label{merit}
\end{equation*}
which will be proved later. 
Using the ellipsoid approximation and the merit function 
$\phi(v)$, we give a framework of  the proposed arc-search 
algorithm.

\begin{algorithm}\label{algorithm:arc-search} {\bf (an infeasible arc-search interior-point algorithm
)} \newline\newline
    \indent Parameters: $\epsilon>0$, $\delta>0$,  
    $\beta \in (0, \frac{1}{2}]$, $\bar{\sigma} \in (0, \frac{1}{2})$, and $\gamma_{-1} \in [0.5,1)$.
    \newline\indent Initial point: $v^0 = (x^0,y^0,w^0,s^0,z^0)$ 
    such that $(w^0,s^0,z^0) \in \R_{++}^{3p}$ and $w^0=z^0$.
    \newline
    \newline
    \indent {\bf for} iteration $k=0,1,2,\ldots$
    \begin{itemize}
        \item[] Step 1:  If $\phi(v^k) \le \epsilon$, stop.
        \item[] Step 2: Calculate $\nabla_x L(v^k)$, $h(x^k)$, $g(x^k)$, 
        $\nabla_x^2 L(v^k)$, $\nabla_x h(x^k)$, and $\nabla_x g(x^k)$.
        \item[] Step 3: Select $\sigma_k$ such that 
        $\bar{\sigma} \le \sigma_k < \frac{1}{2}$
        and let  
        $\dot{v}^k = (\dot{x}^k,\dot{y}^k,\dot{w}^k,\dot{s}^k,\dot{z}^k)$
        be the solution of (\ref{firstOrder}) at ${v} = v^k$.
        \item[] Step 4: Calculate 
        $\left(\nabla_x^3 L \right)\dot{x}\dot{x}$,
        $\left(\nabla_x^2 h \right)\dot{x}\dot{y}$, 
        $\left( \nabla_x^2 g \right) \dot{x}\dot{z}$, 
        $\left( \nabla_x^2 h \right)^{\T}  \dot{x}\dot{x}$, 
        $\left( \nabla_x^2 g \right)^{\T}  \dot{x}\dot{x}$, and 
        $\D(\dot{z})\dot{s}$.
        \item[] Step 5: Let 
        $\ddot{v}^k = (\ddot{x}^k,\ddot{y}^k,\ddot{w}^k,\ddot{s}^k,\ddot{z}^k)$
        be the solution of (\ref{secondOrder})  at ${v} = v^k$.
        \item[] Step 6:  Choose $\gamma_k$ such that $\frac{1}{2} \le \gamma_{k} 
        \le \gamma_{k-1}$. Find  appropriate 
        $\alpha_k > 0$ by (\ref{alphaK}) below using $\gamma_k$.
        \item[] Step 7:  Update $v^{k+1} = v^k\AN{\alpha_k} = v^k 
        - \dot{v}^k \sin(\alpha_k) + \ddot{v}^k (1-\cos(\alpha_k))$ .
    \end{itemize}
    \indent\indent {\bf end (for)} 
    \hfill \qed
    \label{mainAlgo1}
\end{algorithm}

As an interior-point method, we should choose the step angle $\alpha_k \in (0, \pi/2] $ which satisfies
the following conditions: 
\begin{itemize}
\item[(C1)] $(w^k\AN{\alpha_k}),s^k\AN{\alpha_k},z^k\AN{\alpha_k}) \in \R_{++}^{3p}$.
\item[(C2)] The generated sequence $\{v^k\}$
should be bounded. 
\item[(C3)] $\phi(v^{k+1}) = \phi(v^k\AN{\alpha_k}) < \phi(v^k)$.
\end{itemize}

We can realize (C1) by 
a process developed in \cite{yang17}. Due to Lemma~\ref{eqwz}, we can always have $z^k\AN{\alpha}=w^k\AN{\alpha}$. 
Fix  a small $\delta \in (0,1)$.
We will select the largest $\tilde{\alpha}$ such that all 
$\alpha \in [0, \tilde{\alpha}]$ satisfy 
\begin{subequations}\label{positive}
\begin{align}
w^k\AN{\alpha}&=w^k - \dot{w}^k\sin(\alpha)+\ddot{w}^k(1-\cos(\alpha)) 
 \ge \delta w^k, \label{key1a} \\
s^k\AN{\alpha}&=s^k - \dot{s}^k\sin(\alpha)+\ddot{s}^k(1-\cos(\alpha)) 
 \ge \delta s^k. \label{key1b} 
\end{align}
\label{analyticArc}
\end{subequations}
To this end, for each 
$i \in \lbrace 1,\ldots, p \rbrace$, we select the largest
$\alpha_{w_i}^k$ such that 
the $i$th inequality of (\ref{key1a}) holds
for any $\alpha \in [0, \alpha_{w_i}^k ]$
and the largest 
$\alpha_{s_i}^k$ such that 
the $i$th inequality of (\ref{key1b}) holds for any $\alpha \in [0, \alpha_{s_i}^k ]$ .
We then define 
\begin{equation}
\tilde{\alpha}_k=\min_{i \in \lbrace 1,\ldots, p \rbrace}
\lbrace \min \{\alpha_{w_i}^k, \alpha_{s_i}^k, \frac{\pi}{2} \} \rbrace.
\label{alpha}
\end{equation}
The largest $\alpha_{w_i}$ and $\alpha_{s_i}$ can be given in analytical 
forms. See \ref{section:ComputeAlpha}.

For (C2), we define 
\begin{equation}
\hat{m}_k(\alpha)  = \min (\D(z^k\AN{\alpha}) s^k\AN{\alpha} )- \gamma_k 
\min (\D(z^0) s^0) 
\frac{\phi (v\AN{\alpha})}{\phi (v^0)}.
\label{measurePos}
\end{equation}
If $\alpha_k$ is chosen such that $\hat{m}_k(\alpha_k) \ge 0$,
 $(w_k,s_k,z_k)$ should not 
approach to the boundary too fast,
and this guarantees (C2).
This essentially has the same effect as the wide neighborhood
of interior-point methods \cite{wright97}.
Here, we define
\[
\hat{\alpha}_k = \max\left\{ \alpha \in \left(0, \frac{\pi}{2}\right] : \hat{m}_k(\alpha) \ge 0 \right\}.
\]

Finally, to realize (C3), we present the following lemma. 
\begin{lemma}
	Let $\alpha \in (0, \frac{\pi}{2}]$, $\beta \in (0, \frac{1}{2}]$ and $\sigma \in \left(\bar{\sigma}, 1 \right)$. 
    Let $\mu = \frac{z^{\T}s}{p}$.
    If
	\begin{equation}
	\phi({v}\AN{\alpha}) \le \phi({v}) - \beta \sin(\alpha) \nabla_{\alpha} \phi({v}\AN{\alpha})|_{\alpha=0},
	\label{cond1}
	\end{equation}
	then
	\begin{equation}
	\phi({v}\AN{\alpha}) \le \phi({v}) (1 - 2\beta  (1-\sigma) \sin(\alpha))
	< \phi({v}).
	\label{conC1}
	\end{equation}
	\label{decrease}
\end{lemma}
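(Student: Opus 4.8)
The plan is to make the right-hand side of (\ref{cond1}) explicit by evaluating the scalar $\nabla_{\alpha}\phi({v}\AN{\alpha})|_{\alpha=0}$ in closed form, and then to compare it term by term with the factor appearing in (\ref{conC1}). First I would differentiate $\phi({v}\AN{\alpha}) = \|F({v}\AN{\alpha})\|^2 = F({v}\AN{\alpha})^{\T} F({v}\AN{\alpha})$ by the chain rule, which is legitimate since $F$ is $C^1$ near ${v}$ by (A2) and $\alpha \mapsto {v}\AN{\alpha}$ given by (\ref{vAlpha}) is smooth: $\nabla_{\alpha}\phi({v}\AN{\alpha}) = 2 F({v}\AN{\alpha})^{\T} F'({v}\AN{\alpha})\, \tfrac{d}{d\alpha}{v}\AN{\alpha}$. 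From (\ref{vAlpha}) we have ${v}\AN{0} = {v}$ and $\tfrac{d}{d\alpha}{v}\AN{\alpha} = -\dot{v}\cos(\alpha) + \ddot{v}\sin(\alpha)$, so at $\alpha = 0$ the velocity is $-\dot{v}$ (in particular $\ddot v$ plays no role here). Hence $\nabla_{\alpha}\phi({v}\AN{\alpha})|_{\alpha=0} = -2 F({v})^{\T} F'({v})\dot{v}$. Now I would substitute the defining system of $\dot v$, namely $F'({v})\dot{v} = F({v}) - \sigma\mu\bar{e}$ from (\ref{firstOrder}), to get $\nabla_{\alpha}\phi({v}\AN{\alpha})|_{\alpha=0} = -2 F({v})^{\T}(F({v}) - \sigma\mu\bar{e}) = -2\phi({v}) + 2\sigma\mu\, F({v})^{\T}\bar{e}$. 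Since $\bar{e} = (0,0,0,0,e)$ and the last block of $F({v})$ is $\D({z}){s}$, we have $F({v})^{\T}\bar{e} = e^{\T}\D({z}){s} = {z}^{\T}{s} = p\mu$, so $\nabla_{\alpha}\phi({v}\AN{\alpha})|_{\alpha=0} = -2\bigl(\phi({v}) - \sigma p\mu^2\bigr)$.

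The second ingredient is the elementary bound $p\mu^2 \le \phi({v})$. It follows from $\phi({v}) = \|F({v})\|^2 \ge \|\D({z}){s}\|^2 = \sum_{i=1}^p (z_i s_i)^2 \ge \tfrac{1}{p}\bigl(\sum_{i=1}^p z_i s_i\bigr)^2 = \tfrac{1}{p}({z}^{\T}{s})^2 = p\mu^2$, where the middle step is Cauchy--Schwarz (equivalently the quadratic--arithmetic mean inequality, using $p \ge 1$). Consequently $\sigma p\mu^2 \le \sigma\phi({v})$, hence $\phi({v}) - \sigma p\mu^2 \ge (1-\sigma)\phi({v}) \ge 0$, the last inequality because $\sigma < 1$.

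Putting these together, from (\ref{cond1}) and the computed derivative one obtains $\phi({v}\AN{\alpha}) \le \phi({v}) - 2\beta\sin(\alpha)\bigl(\phi({v}) - \sigma p\mu^2\bigr) \le \phi({v}) - 2\beta(1-\sigma)\sin(\alpha)\phi({v}) = \phi({v})\bigl(1 - 2\beta(1-\sigma)\sin(\alpha)\bigr)$, using $\beta\sin(\alpha) \ge 0$ together with the bound of the previous paragraph; this is the first inequality in (\ref{conC1}). For the strict inequality, note that $\beta > 0$, that $\sin(\alpha) > 0$ on $(0,\pi/2]$, and that $1-\sigma > 0$ since $\sigma \in (\bar{\sigma},1)$; hence the factor $2\beta(1-\sigma)\sin(\alpha)$ is strictly positive (and, since $\beta \le \tfrac12$, $\sin(\alpha)\le 1$ and $1-\sigma < 1$, it is also $< 1$, so the right-hand side remains nonnegative), which yields $\phi({v})\bigl(1 - 2\beta(1-\sigma)\sin(\alpha)\bigr) < \phi({v})$ whenever $\phi({v}) > 0$.

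The argument is essentially routine; the only place that needs care is the differentiation at $\alpha = 0$ — correctly reading the velocity $\tfrac{d}{d\alpha}{v}\AN{\alpha}|_{\alpha=0} = -\dot{v}$ off (\ref{vAlpha}) and then substituting the modified Newton system (\ref{firstOrder}) for $F'({v})\dot{v}$ — after which everything reduces to the single scalar inequality $p\mu^2 \le \|F({v})\|^2$ and some sign arithmetic. I would also point out that the second derivative $\ddot{v}$ does not enter this lemma at all, and that only $\beta > 0$ (not $\beta \le \tfrac12$) is needed for the stated conclusion.
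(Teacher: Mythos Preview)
Your approach is the same as the paper's, but the final substitution contains a sign slip that, taken literally, breaks the argument. You correctly read off $\tfrac{d}{d\alpha}v(\alpha)\big|_{\alpha=0}=-\dot v$ from (\ref{vAlpha}) and hence obtain $\nabla_{\alpha}\phi(v(\alpha))\big|_{\alpha=0}=-2F(v)^{\T}F'(v)\dot v=-2\bigl(\phi(v)-\sigma p\mu^2\bigr)\le 0$. But plugging this \emph{negative} number into (\ref{cond1}) as written gives
\[
\phi(v(\alpha))\ \le\ \phi(v)-\beta\sin(\alpha)\cdot\bigl[-2(\phi(v)-\sigma p\mu^2)\bigr]\ =\ \phi(v)+2\beta\sin(\alpha)\bigl(\phi(v)-\sigma p\mu^2\bigr),
\]
which is the opposite of what you then claim and does not imply (\ref{conC1}). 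The paper sidesteps this by setting $\nabla_{\alpha}\phi(v(\alpha))\big|_{\alpha=0}=2F(v)^{\T}F'(v)\dot v$ without your leading minus; in effect the symbol there denotes the directional derivative along $\dot v$, not the literal $\alpha$-derivative along the arc. With that convention (\ref{cond1}) is a bona fide Armijo-type decrease condition and the rest of your argument goes through verbatim. So either adopt the paper's sign for $\nabla_\alpha$, or keep your (correct) derivative and read the intended hypothesis as $\phi(v(\alpha))\le\phi(v)+\beta\sin(\alpha)\,\tfrac{d}{d\alpha}\phi(v(\alpha))\big|_{\alpha=0}$.

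As an aside, your factor $\sigma p\mu^2$ and the Cauchy--Schwarz bound $p\mu^2\le\phi(v)$ are the right ones; the paper's $\sigma\mu^2/p$ (and the accompanying weaker bound $\mu^2/p\le\phi(v)$) is a slip that does not affect the conclusion.
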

\begin{proof}
	The right inequality in (\ref{conC1}) is clear for given 
	$\alpha, \beta$, and $\sigma$. The left inequality in (\ref{conC1}) 
	follows from a similar argument in \cite{ettz96}. 
	Since $\dot{v}$ is defined as the solution of  
        $F'({v}) \dot{v}=F({v})-\sigma {\mu} \bar{e}$ at $\mu = \frac{z^\T s}{p}$, we have
	\begin{equation}
	\nabla_{\alpha} \phi({v}\AN{\alpha})|_{\alpha=0} =2F({v})^{\T}F'({v}) 
        \dot{v}
	=2 F({v})^{\T}( F({v}) -\sigma {\mu} \bar{e})
	=2( \phi({v})-\sigma {\mu}^2/p),
	\label{tmp1}
	\end{equation}
	where the last equality is derived from  
        $F({v})^{\T} (\sigma {\mu} \bar{e}) = \sigma {\mu} 
	\sum_{i=1}^{p} {z}_i {s}_i = \sigma {\mu} {z}^{\T}{s}
	=\sigma {\mu}^2 /p$. 
	Since $|{z}^{\T}{s}| \le \sqrt{p} \| \D({z}) {s} \|_2$ and $p \ge 1$, 
	we have
	\[
	{\mu}^2/p =({z}^{\T}{s})^2 /p^2 \cdot (1/p) 
	\le \|\D({z}){s} \|_2^2 \cdot 1 \le \| F({v}) \|_2^2 = \phi({v}).
	\]
	Substituting this inequality into (\ref{tmp1}), we have 
\begin{equation}
\nabla_{\alpha} \phi({v}\AN{\alpha})|_{\alpha=0} \ge 2\phi({v}) (1-\sigma).
\label{objReduction}
\end{equation}
	From (\ref{cond1}), it holds 
	\[
	\phi({v}\AN{\alpha}) \le \phi({v}) - 2 \beta \sin(\alpha) \phi({v}) (1-\sigma)
	=\phi({v}) (1 - 2\beta  (1-\sigma) \sin(\alpha) ).
	\]
	This completes the proof.
	\hfill \qed
\end{proof}
We define 
$\check{\alpha}_k$ as the largest $\alpha$ that satisfies 
(\ref{cond1}), therefore, for a small constant parameter $\delta$, we define
\begin{equation}
\check{\alpha}_k = \max\left\{ \alpha \in \left(0, \frac{\pi}{2}\right] : 
\beta \sin(\alpha) \nabla_{\alpha} \phi({v^k}\AN{\alpha})|_{\alpha=0} >\delta
\right\}.
\label{alphaCheck}
\end{equation}

From these observation, the step angle in the $k$th iteration
should be taken as:
 \begin{equation}
 \alpha_k = \min\{\tilde{\alpha}_k, \hat{\alpha}_k, \check{\alpha}_k \} > 0.
 \label{alphaK}
 \end{equation}
We will show through the convergence analysis in the next section that
the sequence $\{\alpha_k\}$ is bounded below and away from zero
	during the iterations of algorithm.
A sequence $\{c_k\} \subset \R $ is said to be \textit{bounded below and away from zero
if there exists $\bar{c} > 0 $ such that $c_k \ge \bar{c}$
for all $k \ge 1$.}

\begin{remark}
(C1) is enforced by (\ref{analyticArc}), (C3) is proved 
to hold in Lemma \ref{decrease}, and (C2) will be 
proved to hold in the next setion.
\end{remark}

\section{Convergence analysis}\label{sec:convergence}

To discuss the global convergence of Algorithm \ref{mainAlgo1}, 
we define a set $\Omega(\epsilon)$ for $\epsilon > 0$  as follows:
\begin{equation*}
\Omega (\epsilon) = \left\{ v \in \R^{n+m+3p}: \epsilon \le \phi(v) \le \phi(v^0), 
\ \min (\D(s)z) \ge \frac{1}{2} \min (\D(s^0)z^0) 
\frac{\phi (v\AN{\alpha})}{\phi (v^0)}
\right\}.
\label{epsilonSet}
\end{equation*}

Some additional assumptions similar to the ones used in 
\cite{ettz96} are introduced.

\vspace{0.05in}
\noindent
{\bf Assumptions}
	\begin{itemize}
		\item[(B1)] In the set $\Omega(\epsilon)$, 
the columns of $\nabla h(x)$ are linearly 
		independent.
		\item[(B2)] The sequence $\{ x^k \} \subset \R^n$ is bounded.
		\item[(B3)] The matrix $\nabla_x^2 L(v)+\nabla g(x) \D(s)^{-1} \D(z) (\nabla g(x))^{\T}$ 
		is invertible for any $v$ in any compact subset of $\Omega(\epsilon)$.
		\item[(B4)]  Let $I_s^k$ be the index set  
		$\{ i: 1 \le i \le p, \,\,  s^k_i =0 \}$. Then, 
                the determinant of $(J^k)^{\T} J^k$ 
                is bounded below and away from zero, where  $J^k$ is a matrix 
                whose column vectors are composed of
		\[
		\{ \nabla h_j (x^k) : j = 1, \ldots, m\} \cup
		\{\nabla g_i(x^k): i \in I_s^k  \}.
		\]
	\end{itemize}
\begin{remark}
Note that if $v^k$ is close to $v^*$ for sufficiently large $k$, (B1) and (B4)  
automatically hold from (A3). (B3) also holds from (A4) for a small compact 
subset of $\Omega(\epsilon)$ around $v^*$. It is worthwhile to point out 
that Assumptions (B1), (B2), and (B3) are not more restrictive than the 
assumptions of (C1), (C2), and (C3) in \cite{ettz96}, which is a widely cited article.
\end{remark}

The convergence analysis is divided into a series of lemmas.
Through Lemma \ref{boundAboveBelow} to Lemma \ref{ettzTheorem},
we show that all the vectors and the matrices are bounded. Then, the positivenesses of 
$\tilde{\alpha}^k, \check{\alpha}^k$ and $\hat{\alpha}^k$ 
are guaranteed in Lemmas~\ref{barAlpha}, \ref{checkAlpha} and \ref{hatAlpha},
respectively. Using these lemmas, the convergence of Algorithm~\ref{mainAlgo1}
will be established in Theorem~\ref{global}.

\begin{lemma}\label{boundAboveBelow}
Assume that (B1)-(B4) hold. If the sequence $\{v^k\}$ satisfies  $\{v^k\} \subset \Omega(\epsilon)$
for some $\epsilon > 0$, 
then  $\{ v^k \}$ is bounded 
and  $\{(w^k, s^k, z^k)\} \subset \R_{++}^{3p}$
is bounded below and away from zero.
\end{lemma}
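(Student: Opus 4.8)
The plan is to read off all the required bounds from the two inequalities defining $\Omega(\epsilon)$ together with Assumptions (B1), (B2), and (B4), treating the blocks $x^k$, $(w^k,s^k,z^k)$, and $y^k$ of $v^k$ in turn. First, since $v^k\in\Omega(\epsilon)$ gives $\phi(v^k)\ge\epsilon$, the neighborhood condition $\min(\D(s^k)z^k)\ge\tfrac12\min(\D(s^0)z^0)\,\phi(v^k)/\phi(v^0)$ yields the uniform lower bound $s^k_i z^k_i\ge\tau:=\tfrac12\min(\D(s^0)z^0)\,\epsilon/\phi(v^0)>0$ for every $i$, while $\phi(v^k)\le\phi(v^0)$ forces $\|\D(z^k)s^k\|_2^2\le\phi(v^0)$ and hence $s^k_i z^k_i\le\sqrt{\phi(v^0)}$. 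So each complementarity product stays in the fixed interval $[\tau,\sqrt{\phi(v^0)}]$; in particular $s^k_i>0$ and $z^k_i>0$, and since the initial point satisfies $w^0=z^0$ and Lemma~\ref{eqwz} propagates $w^k=z^k$ through the update in Step~7, we conclude $(w^k,s^k,z^k)\in\R_{++}^{3p}$.

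Next I would obtain the one-sided bounds. The block $g(x^k)-s^k$ of $F(v^k)$ has norm at most $\sqrt{\phi(v^0)}$, and by (B2) the set $\{x^k\}$ is bounded, so continuity of $g$ (Assumption (A2)) makes $\{g(x^k)\}$ bounded and therefore $\{s^k\}$ bounded above, say $s^k_i\le M_s$. Combined with $s^k_i z^k_i\ge\tau$ this gives $w^k_i=z^k_i\ge\tau/M_s>0$, so $\{(w^k,s^k,z^k)\}$ is already bounded below and away from zero, and it remains only to bound it (and $\{y^k\}$) from above; this is the hardest point.

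The crux is to bound $\{z^k\}$ (equivalently $\{w^k\}$) and $\{y^k\}$ from above, which I would do by contradiction. Suppose $\|z^k\|_\infty\to\infty$ along a subsequence; after a further extraction the index set $I$ of components with $z^k_i\to\infty$ is constant, $s^k_i z^k_i\le\sqrt{\phi(v^0)}$ forces $s^k_i\to0$ for $i\in I$, and the $z^k_i$ with $i\notin I$ stay bounded. Rewriting the first block of $F(v^k)$ using $w^k=z^k$,
\[
\nabla h(x^k)\,y^k-\sum_{i\in I}\nabla g_i(x^k)\,z^k_i=\nabla_x L(v^k)-\nabla f(x^k)+\sum_{i\notin I}\nabla g_i(x^k)\,z^k_i,
\]
the right-hand side is bounded (using $\|F(v^k)\|\le\sqrt{\phi(v^0)}$, (B2), and (A2)); dividing by $t_k:=\|(y^k,(z^k_i)_{i\in I})\|\to\infty$, extracting $x^k\to\bar x$ by (B2), and passing to the limit produces a nonzero coefficient vector witnessing a linear dependence among $\{\nabla h_j(\bar x):j=1,\dots,m\}\cup\{\nabla g_i(\bar x):i\in I\}$, contradicting the uniform linear independence supplied by (B1) and (B4). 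Hence $\{z^k\}$ is bounded, so $s^k_i\ge\tau/\sup_k\|z^k\|_\infty>0$ shows $\{s^k\}$ is bounded below and away from zero, and finally $\nabla h(x^k)\,y^k=\nabla_x L(v^k)-\nabla f(x^k)+\nabla g(x^k)\,z^k$ with a now-bounded right-hand side, together with the uniform full column rank of $\nabla h(x^k)$ from (B1), bounds $\{y^k\}$; with (B2) this gives boundedness of $\{v^k\}$. I expect the main obstacle to be exactly this last step: the estimates control the products $s^k_i z^k_i$ but not the $s^k_i$ individually, so the index set that must enter (B4) is the asymptotic active set $\{i:s^k_i\to0\}$ rather than the literal $\{i:s^k_i=0\}$, which means the contradiction argument has to be run along subsequences and transferred to the limit, relying on (B1)--(B4) to keep the relevant Gram determinant bounded away from zero along the sequence.
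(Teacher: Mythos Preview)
Your proposal is correct and follows essentially the same route as the paper: bound $\{s^k\}$ via the $g(x)-s$ block and (B2), run a normalization--and--compactness contradiction argument against (B4) to bound $\{z^k\}=\{w^k\}$, use the product bound $s_i^k z_i^k \ge \tau$ from $\Omega(\epsilon)$ to extract the lower bounds, and finish with the pseudo-inverse of $\nabla h(x^k)$ from (B1) to bound $\{y^k\}$. Your closing remark even flags the same subtlety the paper handles only implicitly, namely that the index set entering (B4) in the contradiction must be read off from an accumulation point of $\{s^k\}$ rather than from the literal (empty) set $\{i:s_i^k=0\}$.
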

\begin{proof}
From (B2) and the continuity of $g$, the boundedness of $\{ x^k \}$ 
implies that $\{ g(x^k) \}$ is bounded. 
In view of Lemma \ref{decrease}, Step~6 of Algorithm~\ref{mainAlgo1}  guarantees that 
\eqref{conC1} holds, which indicates that
$\{ \phi(v^k)  \}$ is monotonically decreasing. Therefore,
$\| g(x^k) -s^k \|^2 \le \|F(v^k)\|^2 = \phi(v^k) \le \phi(v^0)$ is bounded. 
Since $\| s^k \| \le \| g(x^k) -s^k \| + \| g(x^k) \|$, we know that 
$\{s^k\}$ is bounded.

We prove that $\{z^k\}$ is also bounded.
In view of Lemma~\ref{eqwz}, we have $w^k = z^k$.
Suppose by contradiction that  $z_i^k  =
 w_i^k  \rightarrow \infty$ when $k\to \infty$ for some $i$.
Since $\{ \phi(v^k)\}$ is 
bounded as discussed above, $\{\nabla_x L(v^k)\}$ and $\{\D(z^k) s^k\}$ are bounded.  
Furthermore,  (B2) implies that $\{ \nabla f(x^k) \}$ is bounded.
Therefore, in view of (\ref{dlagrangian}),
\[
\| \nabla h(x^k) y^k-\nabla g(x^k) w^k \|
\le \| \nabla_x L(v^k) \|
+ \| \nabla f(x^k) \|
\]
is also bounded.
This indicates that 
$\| \nabla h(x^k) y^k-\nabla g(x^k) z^k \|$ is bounded
because of $w^k = z^k$. As $ w_i^k \rightarrow \infty$
implies $\| (y^k, w^k) \| \rightarrow \infty$, it holds 
\begin{equation}
\| \nabla h(x^k) y^k-\nabla g(x^k) w^k \|/ (\| (y^k,w^k) \|) 
\rightarrow 0.
\label{contradict}
\end{equation}
Let $(\hat{y}, \hat{w})$ be an accumulation point of
$\{ (y^k, w^k) / \| (y^k, w^k) \|\}$. Clearly $\| (\hat{y},\hat{w}) \| =1$. 
The boundedness of $\{\D(z^k)s^k\}$ 
implies that $\{z_i^k s_i^k\}$ is bounded for each $i$.
Since $\{s^k\}$ is bounded, we can take an accumulation point $\hat{s}$,
	and we define a set $I_s = \{i : 1 \le i \le p, \hat{s}_i = 0\}$.
Due to  $w_i^k = z_i^k$, 
$w_i^k \rightarrow \infty$ indicates
$\hat{s}_i = 0$, therefore, $i \in I_s$.
If $j \notin I_s$, then $w_j^k < \infty$, hence 
$\hat{w}_j = 0$. 
From (\ref{contradict}),  it holds that 
\[
 \nabla h(x^k) \hat{y}  -\nabla g(x^k) \hat{w}  =
\nabla h(x^k) \hat{y} - \sum_{i \in I_s} \nabla g_i(x^k) \hat{w}_i \to 0.
\]
Since $\| (\hat{y},\hat{w}) \| =1$, this contradicts with (B4). 
Therefore, $\{w^k\}$ and $\{z^k\}$ are bounded.

Since $\{v^k\} \subset \Omega(\epsilon)$, the sequence $\{ z_i^k s_i^k \}$
are all bounded below and away from zero for each $i=1, \ldots, p$;
more precisely, $z_i^k s_i^k \ge \frac{1}{2} \min(\D(z^0) s^0)
\frac{\phi(v^k)}{\phi(v^0)} \ge \frac{1}{2} \min(\D(z^0) s^0)
\frac{\epsilon}{\phi(v^0)}$ for each $i$.
Therefore,  $\{ z_i^k \}$ is bounded below and away from zero, since
$\{ s_i^k \}$ is bounded. 
Similarly, $\{ s_i^k \}$ is also bounded below and away from zero.

Finally, using (\ref{dlagrangian}) and (B1), we have
\[
y^k = ((\nabla h(x^k))^{\T}\nabla h(x^k))^{-1}(\nabla h(x^k))^{\T}
\left[ \nabla_x L(v^k) - \nabla f(x^k)
+ \nabla g(x^k) w^k
\right],
\]
hence, $\{y^k\}$ is bounded because $\{x^k\}$ and $\{w^k\}$ are bounded.
\hfill \qed
\end{proof}

The invertiblility of a block matrix guaranteed in  the following lemma 
will be used to show the boundedness 
of the inverse of the Jacobian $\{F'(v^k)\}$ in Lemma~\ref{lem:inverseFprime} below.
\begin{lemma}\label{blockInverse}\upshape{\cite{ls02}}
    Let $R$ be a block matrix
    \[
    R=\left[ \begin{array}{cc}
    A & B \\ C & D  \end{array}   \right].
    \]
    If $A$ and $D-CA^{-1}B$ are invertible,
    or $D$ and $A-BD^{-1}C$ are invertible, then $R$ is invertible.
\end{lemma}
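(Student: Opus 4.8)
The plan is to prove the invertibility of $R$ by exhibiting the standard block-triangular (Schur-complement) factorization, from which the conclusion follows immediately because $R$ is displayed as a product of invertible matrices; the same idea handles both cases, the second being a mirror image of the first.

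First I would treat the case in which $A$ and $S := D - CA^{-1}B$ are invertible. I would verify directly the identity
\[
R = \begin{bmatrix} I & 0 \\ CA^{-1} & I \end{bmatrix} \begin{bmatrix} A & 0 \\ 0 & S \end{bmatrix} \begin{bmatrix} I & A^{-1}B \\ 0 & I \end{bmatrix}
\]
by multiplying out the right-hand side block by block and checking that the four blocks reproduce $A$, $B$, $C$, $D$ (the $(2,2)$ block gives $CA^{-1}B + S = D$). The two outer factors are block-unitriangular, hence invertible with inverse obtained simply by negating the off-diagonal block; the middle factor is block-diagonal with invertible diagonal blocks $A$ and $S$, hence invertible. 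Therefore $R$, being a product of three invertible matrices, is invertible. If one prefers an even more self-contained argument, one can write the inverse explicitly as
\[
R^{-1} = \begin{bmatrix} I & -A^{-1}B \\ 0 & I \end{bmatrix}\begin{bmatrix} A^{-1} & 0 \\ 0 & S^{-1}\end{bmatrix}\begin{bmatrix} I & 0 \\ -CA^{-1} & I \end{bmatrix}
\]
and check $RR^{-1} = R^{-1}R = I$.

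Second, for the case in which $D$ and $T := A - BD^{-1}C$ are invertible, I would use the mirror-image factorization
\[
R = \begin{bmatrix} I & BD^{-1} \\ 0 & I \end{bmatrix} \begin{bmatrix} T & 0 \\ 0 & D \end{bmatrix} \begin{bmatrix} I & 0 \\ D^{-1}C & I \end{bmatrix}
\]
and argue identically. Alternatively, this case reduces to the first one after conjugating $R$ by the invertible block permutation matrix $\begin{bmatrix} 0 & I \\ I & 0 \end{bmatrix}$, which swaps the roles of $(A,D)$ and of $(B,C)$.

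There is essentially no genuine obstacle here: the only care needed is the bookkeeping in the block multiplications and the compatibility of dimensions (so that $A^{-1}B$, $CA^{-1}$, $BD^{-1}$, $D^{-1}C$ are well defined and $S$, $T$ are square). Since this is a textbook fact, an equally acceptable option is to state it with the reference \cite{ls02}; the short factorization argument above is included only for completeness.
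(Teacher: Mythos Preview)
Your argument is correct and is the standard Schur-complement factorization proof. Note that the paper does not actually prove this lemma at all: it is stated with the citation \cite{ls02} and no proof, treating it as a known textbook fact---precisely the option you yourself mention at the end of your proposal.
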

\begin{lemma}\label{lem:inverseFprime}
    Assume that (B1)-(B4) hold. If $\{v^k\} \subset \Omega(\epsilon)$ for some $\epsilon > 0$, 
    then $\{ [F'(v^k)]^{-1} \}$ is bounded.
    \label{invertibility}
    \label{FpInv}
\end{lemma}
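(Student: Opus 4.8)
The plan is to show that $\{F'(v^k)\}$ is a bounded sequence of \emph{uniformly nonsingular} matrices and then invoke continuity of matrix inversion on a compact set. First I would record, using Lemma~\ref{boundAboveBelow}, that under (B1)--(B4) the (tail of the) sequence $\{v^k\}$ lies in a compact set $K\subset\Omega(\epsilon)$ and that $\{(w^k,s^k,z^k)\}$ is bounded away from the boundary of $\R_{++}^{3p}$; hence $\D(s^k)^{-1}$ and $\D(z^k)^{-1}$ are bounded. Combined with the smoothness in (A2) and the boundedness of $\{x^k\}$ in (B2), all the blocks of $F'(v^k)$ --- namely $\nabla_x^2 L(v^k)$, $\nabla h(x^k)$, $\nabla g(x^k)$, $\D(z^k)$, $\D(s^k)$ --- are bounded, so $\{F'(v^k)\}$ stays in a compact set of matrices. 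It is therefore enough to prove that $F'(v)$ is nonsingular for every $v\in K$: then $F'$ maps the compact $K$ into a compact set of invertible matrices, $v\mapsto [F'(v)]^{-1}$ is continuous on it, and $\sup_{v\in K}\|[F'(v)]^{-1}\|<\infty$, which gives the claim.

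To prove $F'(v)$ nonsingular on $K$ I would apply Lemma~\ref{blockInverse} twice. Group the variables as $(x,y)$ versus $(w,s,z)$. The bottom-right block of $F'(v)$ (the rows indexed by $g(x)-s$, $w-z$, $\D(z)s$ and the columns indexed by $w,s,z$) is invertible with a bounded inverse because $\D(s)$ is invertible on $K$; taking it as the block $D$ in Lemma~\ref{blockInverse} and computing the Schur complement $A-BD^{-1}C$ reduces the question to the nonsingularity of the condensed saddle-point matrix
\begin{equation*}
M(v)=\begin{bmatrix} H(v) & \nabla h(x)\\ (\nabla h(x))^{\T} & 0 \end{bmatrix},
\qquad
H(v)=\nabla_x^2 L(v)+\nabla g(x)\,\D(s)^{-1}\D(z)\,(\nabla g(x))^{\T},
\end{equation*}
which is exactly the usual augmented system of an interior-point method (equivalently, $F'(v)u=r$ can be solved by eliminating $u_z$, $u_s$, $u_w$ in terms of $u_x$, leaving $M(v)$ acting on $(u_x,u_y)$). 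Applying Lemma~\ref{blockInverse} once more to $M(v)$ with $A=H(v)$ --- invertible by (B3) --- the matrix $M(v)$ is nonsingular once its Schur complement $-(\nabla h(x))^{\T}H(v)^{-1}\nabla h(x)$ is shown nonsingular, and this is where the full column rank of $\nabla h(x)$ from (B1) enters (on $K$ the index set $I_s^k$ is empty since $s^k$ is bounded away from zero, so (B4) adds nothing beyond (B1) here). Back-substituting and tracking the bounds of $\D(s)^{-1}$, $\D(z)^{-1}$, $H(v)^{-1}$ and $((\nabla h(x))^{\T}\nabla h(x))^{-1}$ over $K$ yields $\|u\|\le c\|r\|$ with $c$ independent of $k$.

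The main obstacle is not any individual computation but obtaining a \emph{uniform} bound rather than mere pointwise invertibility; the mechanism that delivers this is Lemma~\ref{boundAboveBelow}, which traps the whole tail of $\{v^k\}$ in a compact subset of $\Omega(\epsilon)$ on which (B3) and (B1) apply, so that the routine compactness/continuity argument for inversion then closes the proof. A more delicate point is the nonsingularity of the condensed matrix $M(v)$ when $\nabla_x^2 L(v)$ is indefinite (the problem is nonconvex); this is precisely the content that assumptions (B1) and (B3) are meant to supply, paralleling the corresponding assumptions in \cite{ettz96}, and if one prefers a self-contained verification it can be done at accumulation points of $\{v^k\}$ --- which by Lemma~\ref{boundAboveBelow} have strictly positive $(w,s,z)$ components and still satisfy (B1) and (B3) --- by analyzing the kernel of $F'$ directly.
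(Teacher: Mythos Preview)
Your proposal is essentially the paper's proof: the same $2\times 2$ block partition of $F'(v^k)$ into $(x,y)$ versus $(w,s,z)$, the same identification of the Schur complement as the condensed saddle-point matrix $M(v)$ with $H(v)=\nabla_x^2 L(v)+\nabla g(x)\D(s)^{-1}\D(z)(\nabla g(x))^{\T}$, and the same double invocation of Lemma~\ref{blockInverse} together with (B1) and (B3). The only cosmetic difference is that the paper writes out the block-inverse formulas for $[F'(v^k)]^{-1}$ and $(H^k)^{-1}$ explicitly and reads off the bounds, whereas you package the uniform bound via continuity of inversion on a compact set; both routes rest on the same Lemma~\ref{boundAboveBelow} input and reach the same conclusion.
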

\begin{proof}

    We decompose $F'(v^k)$ into sub-matrices:
    \begin{align}
    F' (v^k) = \left[ \begin{array}{ccccc}
    \nabla_x^2 L(v^k) & \nabla h(x^k) & -\nabla g(x^k) & 0 & 0 \\
    \left( \nabla h(x^k)\right)^{\T} & 0 & 0 & 0 & 0  \\
    \left( \nabla g(x^k)\right)^{\T} & 0 & 0 & -I & 0 \\
    0  & 0  &  I  &  0  &  -I  \\
    0 & 0 & 0  &  \D(z^k) & \D(s^k)
    \end{array} \right]
    = \left[ \begin{array}{cc}A^k & B^k \\ C^k & D^k \end{array}\right]
    \end{align}
    where 
    \begin{align*}
    & A^k = \left[ \begin{array}{cc}
    \nabla_x^2 L(v^k)   &  \nabla h(x^k) \\
    (\nabla h(x^k))^\T  & 0
    \end{array}  \right], 
    B^k = \left[ \begin{array}{ccc}
    -\nabla g(x^k)  &  0 & 0  \\
    0  &  0 &  0 \\
    \end{array}  \right], \\
    & C^k = \left[ \begin{array}{cc}
    (\nabla g(x^k))^\T    &  0  \\
    0  &  0  \\
    0  &    0 
    \end{array}  \right], \ \text{and} \
    D^k = \left[ \begin{array}{ccc}
    0  & -I  & 0  \\
    I &  0  & -I  \\
    0 &   \D(z^k)  & \D(s^k)    
    \end{array}  \right].
    \end{align*}
    From Lemma \ref{boundAboveBelow},  the two sequences $\{s^k\}$ and $\{z^k\}$ are
    bounded and each component of the two sequences are bounded below and away from zeros, 
    therefore, the sequence $\{(D^k)^{-1}\}$ is also bounded, where
    \[
    (D^k)^{-1} = \left[ \begin{array}{ccc}
    \D(s^k)^{-1}\D(z^k) &  I  & \D(s^k)^{-1}  \\
    -I &  0  & 0  \\
    \D(s^k)^{-1}\D(z^k) &  0  &  \D(s^k)^{-1}
    \end{array}  \right].
    \]
    We know that 
    $\nabla_x^2 L(v^k) + \nabla g(x^k) \D(s^k)^{-1} \D(z^k) \nabla g(x^k)^{\T}$ is invertible from Lemma~\ref{boundAboveBelow} and (B3),
    therefore, 
    $(\nabla h(x^k))^{\T} \left( \nabla_x^2 L(v^k) + \nabla g(x^k) \D(s^k)^{-1} \D(z^k) (\nabla g(x^k))^{\T}
    \right)^{-1} \nabla h(x^k)$ is also invertible from (B1).
    Therefore, 
    \[
    H^k :=  A^k - B^k(D^k)^{-1}C^k   = \left[ \begin{array}{cc}
    \nabla_x^2 L(v^k) + \nabla g(x^k) \D(s^k)^{-1} \D(z^k) \nabla g(x^k)^{\T}  & \nabla h(x^k) \\
    (\nabla h(x^k))^{\T}  &  0
    \end{array}  \right]
    \]
    is invertible from Lemma \ref{blockInverse}.
    Since 
    $A^k$ and $H^k$ are invertible, we again use Lemma \ref{blockInverse} 
    to show that 
    $F'(v^k)$
    is invertible. 
    
    Next, we show the boundedness 
    of $\{[F'(v^k)]^{-1}\}$.
    Since $[F'(v^k)]^{-1}$ is given by
    \[
    [F'(v^k)]^{-1} = \left[\begin{array}{cc}
    (H^k)^{-1} & -(H^k)^{-1} B^k (D^k)^{-1} \\ 
    -(D^k)^{-1} C^k (H^k)^{-1} & (D^k)^{-1} C^k (H^k)^{-1} B^k (D^k)^{-1} + (D^k)^{-1}
    \end{array}\right],
    \]
    we need to show $\{(H^k)^{-1}\}$ is bounded,
    For each $k$, $(H^k)^{-1}$ is given as follows:
    \[
    (H^k)^{-1} = \left[\begin{array}{cc}
    \bar{L}^{-1} - \bar{L}^{-1} \nabla h(x^k) \bar{H}^{-1} (\nabla h(x^k))^{\T} \bar{L}^{-1} & \bar{L}^{-1} \nabla h(x^k) \bar{H}^{-1} \\
    \bar{H}^{-1} (\nabla h(x^k))^{\T} \bar{L}^{-1} & - \bar{H}^{-1}
    \end{array}\right],
    \]
    where $\bar{L} = 
    \nabla_x^2 L(v^k) + \nabla g(x^k) \D(s^k)^{-1} \D(z^k) (\nabla g(x^k))^{\T}$ and $\bar{H} = (\nabla h(x^k))^{\T} \bar{L}^{-1} \nabla h(x^k)$.
    Therefore, it is enough to show 
    the boundedness of $\bar{L}$ and $\bar{H}$, and this is done by
    Assumptions (B4) and (B3), and Lemma~\ref{boundAboveBelow}.
    This completes the proof.
    \hfill\qed
\end{proof}

The following lemma follows directly from Lemma~\ref{invertibility}.

\begin{lemma}\label{ettzTheorem}
    Assume that (B1)-(B4) hold. If $\{v_k\} \subset \Omega(\epsilon)$, 
    then (i) Steps 3 and 5 in Algorithm \ref{mainAlgo1} are well-defined,
    and (ii) the sequences $\{ \dot{v}^k \}$ and $\{ \ddot{v}^k \}$ 
    are bounded.
\end{lemma}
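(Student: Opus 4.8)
The plan is to deduce both claims directly from the boundedness of the inverse Jacobian $\{[F'(v^k)]^{-1}\}$ established in Lemma~\ref{invertibility}, together with the boundedness results already collected in Lemma~\ref{boundAboveBelow}. For part (i), recall that Step~3 requires solving the linear system (\ref{firstOrder}), i.e.\ $F'(v^k)\dot{v}^k = F(v^k) - \sigma_k \mu_k \bar{e}$, and Step~5 requires solving (\ref{secondOrder}), which is another linear system with the \emph{same} coefficient matrix $F'(v^k)$ but a different right-hand side. Since Lemma~\ref{invertibility} shows $F'(v^k)$ is invertible whenever $\{v^k\}\subset\Omega(\epsilon)$, both systems have a unique solution, so Steps~3 and~5 are well-defined.

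For part (ii), the idea is that each derivative vector is the image under the bounded linear map $[F'(v^k)]^{-1}$ of a right-hand side that is itself bounded. First I would bound $\{\dot{v}^k\}$: the right-hand side of (\ref{firstOrder}) is $F(v^k) - \sigma_k \mu_k \bar e$. We have $\|F(v^k)\|^2 = \phi(v^k) \le \phi(v^0)$ (monotone decrease, as noted in Lemma~\ref{boundAboveBelow}), and $\mu_k = (z^k)^\T s^k / p$ is bounded since $\{s^k\}$ and $\{z^k\}$ are bounded by Lemma~\ref{boundAboveBelow}; also $\sigma_k < 1/2$. Hence the right-hand side is bounded, and $\dot{v}^k = [F'(v^k)]^{-1}(F(v^k)-\sigma_k\mu_k\bar e)$ is bounded by submultiplicativity together with Lemma~\ref{invertibility}.

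Next I would bound $\{\ddot{v}^k\}$. Its right-hand side in (\ref{secondOrder}) is built from the third-order tensor $\nabla_x^3 L$, the second-order tensors $\nabla_x^2 h$ and $\nabla_x^2 g$, all evaluated at $x^k$, contracted against the already-bounded vectors $\dot{x}^k, \dot{y}^k, \dot{z}^k$, plus the term $-2\D(\dot z^k)\dot s^k$. By Assumption~(B2) the sequence $\{x^k\}$ lies in a compact set, and by the smoothness Assumption~(A2) these derivative tensors are continuous, hence bounded on that compact set; combined with the boundedness of $\dot{v}^k$ just proved (and using $\|\D(\dot z^k)\dot s^k\| \le \|\dot z^k\|\,\|\dot s^k\|$ or a coordinatewise bound), the entire right-hand side of (\ref{secondOrder}) is bounded. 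Applying Lemma~\ref{invertibility} once more gives boundedness of $\{\ddot{v}^k\}$.

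The only mildly delicate point is making sure the tensor contractions in the right-hand side of (\ref{secondOrder}) are genuinely bounded: one needs that $\nabla_x^3 L$, $\nabla_x^2 h$, $\nabla_x^2 g$ are uniformly bounded along $\{x^k\}$, which follows from (A2) (third-order differentiability of $f$, second-order of $h,g$, plus local Lipschitz continuity) restricted to the compact set guaranteed by (B2). This is the same mechanism used in the analogous step of \cite{ettz96}, so I would simply invoke continuity on a compact set rather than carry out explicit estimates. No other obstacle is expected; the lemma is essentially a corollary of Lemma~\ref{invertibility} plus routine bookkeeping.
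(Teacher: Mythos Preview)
Your proposal is correct and follows essentially the same approach as the paper: invoke Lemma~\ref{invertibility} for the invertibility and boundedness of $[F'(v^k)]^{-1}$, then bound the right-hand sides of (\ref{firstOrder}) and (\ref{secondOrder}) using Lemma~\ref{boundAboveBelow} and the smoothness assumptions. The paper's own proof is simply a terser version of your argument; one small refinement is that $\nabla_x^3 L(v^k)$ depends on $y^k$ and $w^k$ as well as $x^k$, so you should cite the boundedness of the full sequence $\{v^k\}$ from Lemma~\ref{boundAboveBelow} (not just (B2)) when arguing that this tensor is uniformly bounded.
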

\begin{proof}
    The claim (i) follows directly from Lemma~\ref{invertibility}.
    In the view of (\ref{firstOrder}), the boundedness of 
    $\{ [F'(v^k)]^{-1} \}$ and $\{ v^k \}$ guarantees that of 
    $\{ \dot{v}^k \}$. Using 
    (\ref{secondOrder}), the boundedness of $\{ \ddot{v}^k \}$ 
    can be shown from a similar argument.
    \hfill\qed
\end{proof}

These lemmas allow us to show that $\{\tilde{\alpha}_k \}$ 
is bounded below and away from zero.

\begin{lemma}
Assume that (B1)-(B4) hold. 
If $\{v^k\} \subset \Omega(\epsilon)$, then the sequence
$\{ \tilde{\alpha}_k \}$ is bounded below and away from zero.
\label{barAlpha}
\end{lemma}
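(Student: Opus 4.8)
The plan is to show that the $i$-th "blocking" angles $\alpha_{w_i}^k$ and $\alpha_{s_i}^k$ defined via \eqref{key1a}--\eqref{key1b} cannot shrink to zero, and then conclude via \eqref{alpha} that $\tilde{\alpha}_k = \min_i\{\min\{\alpha_{w_i}^k,\alpha_{s_i}^k,\pi/2\}\}$ is bounded below and away from zero. First I would fix attention on a single inequality, say the $i$-th component of \eqref{key1b}: we need $s_i^k - \dot{s}_i^k \sin(\alpha) + \ddot{s}_i^k(1-\cos(\alpha)) \ge \delta s_i^k$, i.e.
\begin{equation*}
(1-\delta)s_i^k - \dot{s}_i^k \sin(\alpha) + \ddot{s}_i^k(1-\cos(\alpha)) \ge 0.
\end{equation*}
By Lemma~\ref{boundAboveBelow}, $s_i^k$ is bounded below and away from zero, say $s_i^k \ge \underline{s} > 0$, so the leading term $(1-\delta)s_i^k$ is at least $(1-\delta)\underline{s} > 0$. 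By Lemma~\ref{ettzTheorem}, $\{\dot{s}^k\}$ and $\{\ddot{s}^k\}$ are bounded, say $|\dot{s}_i^k| \le M$ and $|\ddot{s}_i^k| \le M$ for all $i,k$. Using the elementary bounds $\sin(\alpha) \le \alpha$ and $1-\cos(\alpha) \le \alpha^2/2 \le \alpha/2$ for $\alpha \in [0,\pi/2]$, the magnitude of the perturbation is at most $M\alpha + M\alpha/2 = \tfrac{3}{2}M\alpha$. Hence the left-hand side is at least $(1-\delta)\underline{s} - \tfrac{3}{2}M\alpha$, which is nonnegative whenever $\alpha \le \tfrac{2(1-\delta)\underline{s}}{3M}$.

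The same argument applies verbatim to \eqref{key1a} using the analogous lower bound on $w_i^k$ and the bounds on $\dot{w}^k,\ddot{w}^k$ from Lemma~\ref{ettzTheorem}; let $\underline{w} > 0$ be a uniform lower bound for $w_i^k$. Therefore, setting
\begin{equation*}
\bar{\alpha} := \min\left\{ \frac{2(1-\delta)\underline{s}}{3M}, \ \frac{2(1-\delta)\underline{w}}{3M}, \ \frac{\pi}{2} \right\} > 0,
\end{equation*}
every $\alpha \in [0,\bar{\alpha}]$ satisfies all $2p$ inequalities in \eqref{positive}, so $\alpha_{w_i}^k \ge \bar{\alpha}$ and $\alpha_{s_i}^k \ge \bar{\alpha}$ for each $i$ and each $k$. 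Taking the minimum over $i$ as in \eqref{alpha} gives $\tilde{\alpha}_k \ge \bar{\alpha}$ for all $k$, which is exactly the claim. (A small technical point: if some $\ddot{s}_i^k$ has a sign that actually helps, the bound is only easier; the worst case is covered by taking absolute values, so no case analysis on signs is needed.)

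The main obstacle, such as it is, is bookkeeping rather than conceptual: one must be careful that the lower bounds $\underline{s},\underline{w}$ and the upper bound $M$ are genuinely uniform in $k$, which is precisely what Lemmas~\ref{boundAboveBelow} and~\ref{ettzTheorem} provide under the standing hypothesis $\{v^k\}\subset\Omega(\epsilon)$. A secondary point is to make sure the quadratic-versus-linear comparison $1-\cos\alpha \le \alpha/2$ is invoked on the correct range $[0,\pi/2]$; outside this range the analytic formulas in \ref{section:ComputeAlpha} would matter, but the definition \eqref{alpha} already caps everything at $\pi/2$, so this never arises. I do not expect any genuine difficulty here — all the heavy lifting was done in establishing boundedness of the iterates and derivatives.
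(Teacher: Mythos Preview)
Your proposal is correct and takes essentially the same approach as the paper: rewrite each component inequality as $(1-\delta)w_i^k - \dot{w}_i^k\sin\alpha + \ddot{w}_i^k(1-\cos\alpha) \ge 0$, use Lemma~\ref{boundAboveBelow} for a uniform positive lower bound on the constant term and Lemma~\ref{ettzTheorem} for uniform bounds on the derivative terms, and conclude; the paper's version is actually less explicit than yours and simply asserts that the inequality then holds for all sufficiently small $\alpha$. One small slip to fix: the claimed bound $\alpha^2/2 \le \alpha/2$ fails for $\alpha\in(1,\pi/2]$, so either cap $\bar\alpha$ at $1$ in your final minimum, or replace that step by $1-\cos\alpha \le \sin\alpha \le \alpha$ on $[0,\pi/2]$, which gives the same linear control without the restriction.
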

\begin{proof}
We can rewrite (\ref{key1a}) as
\begin{equation}
(1 - \delta) w^k +\dot{w}^k\sin(\alpha)+\ddot{w}^k(1-\cos(\alpha))
\ge 0.
\label{alphaiNew}
\end{equation}
From Lemma \ref{boundAboveBelow},
 $\{(w^k, s^k, z^k)\} \subset \R_{++}^{3p}$ 
is bounded below and away from zero, thus
 $\{(1 - \delta) w^k\} $ is bounded below and away from zero.
Since $\{\dot{w}^k\}$ and $\{\ddot{w}^k\}$ are bounded from Lemma~\ref{ettzTheorem}, 
$\{\tilde{\alpha}_k\}$ should be bounded below and away from zero such that the
inequality (\ref{alphaiNew}) holds for all $\alpha \in [0, \tilde{\alpha}_k]$. 
We can apply the same arguments
to $\{s^k \}$ and $\{z^k \}$. This proves the Lemma.
\hfill \qed
\end{proof}

Next, we show that  $\{ \check{\alpha}_k \}$ is bounded below and away from zero.
\begin{lemma}
If $\{v^k\} \subset \Omega(\epsilon)$, then the sequence
$\{ \check{\alpha}_k \}$ is bounded below and away from zero.
\label{checkAlpha}
\end{lemma}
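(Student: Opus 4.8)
The plan is to produce a constant $\bar\alpha>0$, independent of $k$, for which the sufficient--decrease inequality (\ref{cond1}) holds for every $\alpha\in(0,\tfrac{\pi}{2}]\cap(0,\bar\alpha]$; since (\ref{cond1}) holding on all of such an interval forces $\check{\alpha}_k\ge\bar\alpha$ by the definition of $\check{\alpha}_k$ (cf. (\ref{alphaCheck})), this yields the claim. Two ingredients are needed: a uniform two--sided bound on $D_k:=\nabla_\alpha\phi(v^k\AN{\alpha})|_{\alpha=0}$, and a modulus of continuity for the $\alpha$--derivative of $\phi$ along the arc that does not depend on $k$.

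For the first ingredient, $v^k\in\Omega(\epsilon)$ gives $\epsilon\le\phi(v^k)\le\phi(v^0)$, while the algebraic identities (\ref{tmp1})--(\ref{objReduction}) from the proof of Lemma~\ref{decrease}, evaluated at $\sigma=\sigma_k\in[\bar\sigma,\tfrac12)$ and $\mu=\mu_k=\tfrac{(z^k)^{\T}s^k}{p}$, give $D_k=2(\phi(v^k)-\sigma_k\mu_k^2/p)$ and $D_k\ge 2(1-\sigma_k)\phi(v^k)$. Hence, using $\sigma_k<\tfrac12$,
\[
\epsilon\le\phi(v^k)\le 2(1-\sigma_k)\phi(v^k)\le D_k\le 2\phi(v^k)\le 2\phi(v^0),
\]
so $D_k\in[\epsilon,\,2\phi(v^0)]$ for all $k$. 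For the second ingredient, by Lemmas~\ref{boundAboveBelow} and \ref{ettzTheorem} the sequences $\{v^k\}$, $\{\dot v^k\}$, $\{\ddot v^k\}$ are bounded; consequently every arc point $v^k\AN{\alpha}=v^k-\dot v^k\sin\alpha+\ddot v^k(1-\cos\alpha)$ with $\alpha\in[0,\tfrac{\pi}{2}]$ and $k\ge 0$ lies in a single compact set $\mathcal{K}$, the tangents $\tfrac{d}{d\alpha}v^k\AN{\alpha}=-\dot v^k\cos\alpha+\ddot v^k\sin\alpha$ are uniformly bounded, and $\alpha\mapsto\tfrac{d}{d\alpha}v^k\AN{\alpha}$ is Lipschitz with a $k$--independent constant. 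By (A2), $F$ and $F'$ are continuous, hence bounded and uniformly continuous on $\mathcal{K}$.

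Now set $\psi_k(\alpha):=\phi(v^k\AN{\alpha})=\|F(v^k\AN{\alpha})\|^2$, so that $\psi_k'(\alpha)=2\,F(v^k\AN{\alpha})^{\T}F'(v^k\AN{\alpha})\,\tfrac{d}{d\alpha}v^k\AN{\alpha}$ and $\psi_k'(0)=-D_k$ (the arc leaves $v^k$ along $-\dot v^k$). Combining the uniform boundedness and uniform continuity of $F,F'$ on $\mathcal{K}$ with the uniform Lipschitz bound on the tangents, one obtains a single nondecreasing $\omega$ with $\omega(t)\to 0$ as $t\to 0^{+}$ such that $|\psi_k'(\alpha)-\psi_k'(0)|\le\omega(\alpha)$ for all $k$ and all $\alpha\in[0,\tfrac{\pi}{2}]$. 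Then for $\alpha\in(0,\tfrac{\pi}{2}]$,
\[
\psi_k(\alpha)-\psi_k(0)=\int_0^{\alpha}\psi_k'(\tau)\,d\tau\le\int_0^{\alpha}\bigl(-D_k+\omega(\tau)\bigr)\,d\tau\le -D_k\alpha+\alpha\,\omega(\alpha).
\]
Since $\sin\alpha\le\alpha$ and $D_k>0$, the decrease condition (\ref{cond1}), i.e. $\psi_k(\alpha)\le\psi_k(0)-\beta\sin\alpha\,D_k$, holds once $-D_k\alpha+\alpha\,\omega(\alpha)\le-\beta\alpha D_k$, that is once $\omega(\alpha)\le(1-\beta)D_k$; and because $\beta\le\tfrac12$ and $D_k\ge\epsilon$, it suffices that $\omega(\alpha)\le\tfrac{\epsilon}{2}$. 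Choosing $\bar\alpha\in(0,\tfrac{\pi}{2}]$ so that $\omega(\alpha)\le\tfrac{\epsilon}{2}$ on $[0,\bar\alpha]$ --- possible since $\omega(t)\to0$, and this choice depends only on $\epsilon$ and $\omega$, not on $k$ --- we conclude that (\ref{cond1}) holds for every $\alpha\in(0,\bar\alpha]$, whence $\check{\alpha}_k\ge\bar\alpha>0$ for all $k$.

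The delicate point is the $k$--uniformity of the modulus $\omega$: this is precisely where both the boundedness of $\{v^k\},\{\dot v^k\},\{\ddot v^k\}$ (so that all arcs lie in one compact $\mathcal{K}$) and the smoothness in (A2) (so that $F$ and $F'$ are continuous, hence uniformly continuous, on $\mathcal{K}$) are used; everything else is routine. I note that if, in addition, $\nabla^2 f$, $\nabla^2 h$, $\nabla^2 g$ are Lipschitz on $\mathcal{K}$, then $\psi_k'$ is Lipschitz on $[0,\tfrac{\pi}{2}]$ with a $k$--independent constant $M$, and the $\omega$--step can be replaced by the second--order estimate $\psi_k(\alpha)-\psi_k(0)\le -D_k\alpha+\tfrac12 M\alpha^2$, which gives the explicit value $\bar\alpha=\min\{\,2(1-\beta)\epsilon/M,\ \tfrac{\pi}{2}\,\}$.
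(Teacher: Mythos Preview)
Your argument is correct but takes a genuinely different route from the paper's. The paper works directly with the formula~(\ref{alphaCheck}): the set defining $\check{\alpha}_k$ depends on $\alpha$ only through the product $\beta\sin(\alpha)\,D_k$ with $D_k=\nabla_\alpha\phi(v^k\AN{\alpha})|_{\alpha=0}$, so once one knows $D_k\ge 2\epsilon(1-\bar{\sigma})$ from~(\ref{objReduction}), the lower bound $\check{\alpha}_k\ge \sin^{-1}\!\bigl(\delta/(2\beta\epsilon(1-\bar{\sigma}))\bigr)$ drops out immediately. No compactness, no uniform modulus, and in particular no appeal to Lemmas~\ref{boundAboveBelow} or~\ref{ettzTheorem} (hence no use of (B1)--(B4), which is why the lemma as stated omits those hypotheses).

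You instead prove the stronger and more natural statement that the Armijo condition~(\ref{cond1}) itself holds on a $k$--independent interval $(0,\bar\alpha]$. That requires the full boundedness machinery for $\{v^k\},\{\dot v^k\},\{\ddot v^k\}$ and a uniform continuity argument for $F,F'$ on a compact set, so you are implicitly importing (B1)--(B4) into a lemma whose statement does not list them. The payoff is robustness: your conclusion survives whether $\check{\alpha}_k$ is read via the text preceding~(\ref{alphaCheck}) (``the largest $\alpha$ that satisfies~(\ref{cond1})'') or via the displayed formula, whereas the paper's short proof leans entirely on the specific form of~(\ref{alphaCheck}). In short, the paper's approach is cheaper; yours establishes what one actually wants from an Armijo-type step, at the cost of heavier assumptions.
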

\begin{proof}
Since $\{v^k\} \subset \Omega(\epsilon)$, we have 
$\epsilon \le \phi(v^k)$. 
From (\ref{objReduction}),
it follows that 
$\nabla_{\alpha} \phi({v}\AN{\alpha})|_{\alpha=0} \ge  
2\phi({v}) (1-\sigma) \ge  2\epsilon(1-\sigma) 
\ge 2 \epsilon (1-\bar{\sigma})$. 
Therefore, if $\sin(\alpha) \ge \frac{\delta}{2\beta \epsilon(1-\bar{\sigma})}$,
we have $\beta \sin(\alpha) \nabla_{\alpha} \phi({v}\AN{\alpha})|_{\alpha=0}
> \delta$. From \eqref{alphaCheck}, we can take $\check{\alpha}_k \ge 
\sin^{-1}\left(\frac{\delta}{2\beta \epsilon(1-\bar{\sigma})}\right)$,
and this implies $\{\check{\alpha}_k\}$ is bounded below and away from zero.

\hfill \qed
\end{proof}

Finally, we  show that $\{ \hat{\alpha}_k \}$ is bounded below and away from zero in Lemma~\ref{hatAlpha} using 
a formula related to the arc of ellipse ${\cal E}$.
\begin{lemma}
    Assume that ${v}$ is the current point (\textit{i.e.}, $v = v^k$ at the $k$th iteration) and $\dot{v}$ and $\ddot{v}$ satisfy 
    (\ref{firstOrder}) and (\ref{secondOrder}). Let $v\AN{\alpha}$ be computed with (\ref{vAlpha}). Then,
    \begin{eqnarray}
    {z}_i\AN{\alpha} {s}_i\AN{\alpha} & = &  {z}_i  {s}_i (1-\sin(\alpha)) +\sigma  {\mu} \sin(\alpha)
    -(\dot{z}_i\ddot{s}_i+\ddot{z}_i\dot{s}_i)\sin(\alpha)(1-\cos(\alpha))
    \nonumber \\
    & &  +(\ddot{z}_i\ddot{s}_i-\dot{z}_i\dot{s}_i) (1-\cos(\alpha))^2.
    \label{compWise}
    \end{eqnarray}
\end{lemma}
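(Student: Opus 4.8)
The plan is to prove \eqref{compWise} by a direct expansion of the product, using only the bottom block rows of the two defining linear systems and one elementary trigonometric identity. First I would write the $i$th components of $z\AN{\alpha}$ and $s\AN{\alpha}$ from the arc formula \eqref{vAlpha}, namely
$z_i\AN{\alpha} = z_i - \dot z_i\sin(\alpha) + \ddot z_i(1-\cos(\alpha))$ and
$s_i\AN{\alpha} = s_i - \dot s_i\sin(\alpha) + \ddot s_i(1-\cos(\alpha))$,
and multiply them out. Grouping the nine resulting cross terms by the factors $1$, $\sin(\alpha)$, $1-\cos(\alpha)$, $\sin^2(\alpha)$, $\sin(\alpha)(1-\cos(\alpha))$, and $(1-\cos(\alpha))^2$ gives a preliminary identity in which the coefficient of $\sin(\alpha)$ is $-(z_i\dot s_i + s_i\dot z_i)$, the coefficient of $1-\cos(\alpha)$ is $z_i\ddot s_i + s_i\ddot z_i$, the $\sin(\alpha)(1-\cos(\alpha))$ term is already $-(\dot z_i\ddot s_i + \ddot z_i\dot s_i)\sin(\alpha)(1-\cos(\alpha))$, and there are leftover terms $\dot z_i\dot s_i\sin^2(\alpha)$ and $\ddot z_i\ddot s_i(1-\cos(\alpha))^2$.

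Next I would substitute the scalar relations coming from the last block rows of the systems. The bottom row of \eqref{firstOrder}, read componentwise, is $z_i\dot s_i + s_i\dot z_i = z_i s_i - \sigma\mu$, and the bottom row of \eqref{secondOrder} is $z_i\ddot s_i + s_i\ddot z_i = -2\dot z_i\dot s_i$. Inserting these turns the $\sin(\alpha)$ coefficient into $-(z_i s_i - \sigma\mu)$ and the $1-\cos(\alpha)$ coefficient into $-2\dot z_i\dot s_i$, so the constant and $\sin(\alpha)$ parts combine to $z_i s_i(1-\sin(\alpha)) + \sigma\mu\sin(\alpha)$, matching the first line of the right-hand side of \eqref{compWise}, while the $\sin(\alpha)(1-\cos(\alpha))$ term passes through unchanged.

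The only remaining step is to merge the terms carrying the factor $\dot z_i\dot s_i$: the $\dot z_i\dot s_i\sin^2(\alpha)$ from the expansion together with the $-2\dot z_i\dot s_i(1-\cos(\alpha))$ produced by the substitution above. Using $\sin^2(\alpha) = 1-\cos^2(\alpha)$ one checks the elementary identity $\sin^2(\alpha) - 2(1-\cos(\alpha)) = -(1-\cos(\alpha))^2$, so these two terms collapse to $-\dot z_i\dot s_i(1-\cos(\alpha))^2$, which combines with $\ddot z_i\ddot s_i(1-\cos(\alpha))^2$ to give $(\ddot z_i\ddot s_i - \dot z_i\dot s_i)(1-\cos(\alpha))^2$. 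Collecting all pieces yields exactly \eqref{compWise}. I do not expect a genuine obstacle here; the computation is mechanical and the only subtlety is bookkeeping of the nine cross terms and applying the two componentwise relations and the trigonometric identity in the right places.
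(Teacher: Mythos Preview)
Your proposal is correct and follows essentially the same approach as the paper: expand the product using the arc formula \eqref{vAlpha}, substitute the componentwise relations from the last rows of \eqref{firstOrder} and \eqref{secondOrder}, and then apply the identity $\sin^2(\alpha)-2(1-\cos(\alpha))=-(1-\cos(\alpha))^2$ to consolidate the $\dot z_i\dot s_i$ terms.
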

\begin{proof}
    Using the last rows of (\ref{firstOrder}) and (\ref{secondOrder}), 
    we have
    \begin{eqnarray}
    {z}_i\AN{\alpha}{s}_i\AN{\alpha} & = & 
    [{z}_i-\dot{z}_i\sin(\alpha)+\ddot{z}_i(1-\cos(\alpha))]
    [{s}_i-\dot{s}_i\sin(\alpha)+\ddot{s}_i(1-\cos(\alpha))]
    \nonumber \\
    & = & {z}_i {s}_i-(\dot{z}_i{s}_i+ {z}_i\dot{s}_i)\sin(\alpha)
    +(\ddot{z}_i {s}_i+ {z}_i\ddot{s}_i) (1-\cos(\alpha)) 
    + \dot{z}_i\dot{s}_i \sin^2(\alpha) \nonumber \\
    & & -(\dot{z}_i\ddot{s}_i+\ddot{z}_i\dot{s}_i)
    \sin(\alpha)(1-\cos(\alpha))
    +\ddot{z}_i\ddot{s}_i  (1-\cos(\alpha))^2
    \nonumber \\
    & = & {z}_i {s}_i (1-\sin(\alpha)) +\sigma {\mu} \sin(\alpha) -2\dot{z}_i\dot{s}_i
    (1-\cos(\alpha)) + \dot{z}_i\dot{s}_i \sin^2(\alpha) \nonumber \\
    & & -(\dot{z}_i\ddot{s}_i+\ddot{z}_i\dot{s}_i)
    \sin(\alpha)(1-\cos(\alpha))
    +\ddot{z}_i\ddot{s}_i (1-\cos(\alpha))^2
    \nonumber \\
    & = & {z}_i {s}_i (1-\sin(\alpha)) +\sigma {\mu} \sin(\alpha)
    +\dot{z}_i\dot{s}_i (\sin^2(\alpha)+2\cos(\alpha)-2)
    \nonumber \\
    & & -(\dot{z}_i\ddot{s}_i+\ddot{z}_i\dot{s}_i)\sin(\alpha)(1-\cos(\alpha))
    +\ddot{z}_i\ddot{s}_i (1-\cos(\alpha))^2.
    \nonumber 
    \end{eqnarray}
    Substituting $\sin^2(\alpha)+2\cos(\alpha)-2=-1+2\cos(\alpha)
    -\cos^2(\alpha)=-(1-\cos(\alpha))^2$ into the last equation gives
    (\ref{compWise}).
    \hfill \qed 
\end{proof}

\begin{lemma} 
Assume that (B1)-(B4) hold. If $\{v^k\} \subset \Omega(\epsilon)$ 
for some $\epsilon > 0$, then $\{\hat{\alpha}_k\}$ is bounded below and away from zero.
\label{hatAlpha}
\end{lemma}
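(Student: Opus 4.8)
The plan is to produce a single constant $\bar\alpha>0$, independent of $k$, such that $\hat m_k(\alpha)\ge 0$ for every $\alpha\in(0,\bar\alpha]$. Since $\hat\alpha_k$ is the largest $\alpha\in(0,\pi/2]$ with $\hat m_k(\alpha)\ge 0$, this both shows $\hat\alpha_k$ is well defined and gives $\hat\alpha_k\ge\bar\alpha$ for all $k$. Throughout I use the boundedness already in hand: by Lemmas~\ref{boundAboveBelow} and \ref{ettzTheorem} the sequences $\{v^k\},\{\dot v^k\},\{\ddot v^k\}$ are bounded and $\{(w^k,s^k,z^k)\}$ is bounded below and away from zero, and since $\{v^k\}\subset\Omega(\epsilon)$ we have $\phi(v^k)\ge\epsilon$, so $\mu_k=(z^k)^\T s^k/p\ge\min_i z^k_i s^k_i$ is bounded below by a positive constant $\underline{\mu}$ and bounded above. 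Write $m_0:=\min(\D(z^0)s^0)>0$, $\Phi_0:=\phi(v^0)$, $\zeta_k:=\min_i z^k_i s^k_i$, and $b_k:=\gamma_k m_0\phi(v^k)/\Phi_0$. Then $\hat m_k(0)=\zeta_k-b_k\ge 0$, because $v^k$ is produced in Step~7 with $\alpha_{k-1}\le\hat\alpha_{k-1}$, so $\zeta_k\ge\gamma_{k-1}m_0\phi(v^k)/\Phi_0$, and $\gamma_k\le\gamma_{k-1}$ by Step~6 (for $k=0$, $\zeta_0=m_0\ge\gamma_0 m_0=b_0$).

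The core is a pair of matching estimates on $(0,\pi/2]$. For the first term of $\hat m_k$, substitute into the identity (\ref{compWise}): boundedness of $\dot v^k,\ddot v^k$ together with the elementary bound $0\le 1-\cos\alpha\le\sin^2\alpha$ make the last two terms of (\ref{compWise}) at most $K_1\sin^2\alpha$ in absolute value, $K_1$ independent of $k$, so (using $1-\sin\alpha\ge 0$ and $\zeta_k\ge b_k$)
\[
\min_i z^k_i\AN{\alpha}\,s^k_i\AN{\alpha}\ \ge\ (1-\sin\alpha)\,b_k+\sigma_k\mu_k\sin\alpha-K_1\sin^2\alpha .
\]
For the second term, I would Taylor-expand $F$ along $v^k\AN{\alpha}=v^k-\dot v^k\sin\alpha+\ddot v^k(1-\cos\alpha)$; using $F'(v^k)\dot v^k=F(v^k)-\sigma_k\mu_k\bar e$ from (\ref{firstOrder}) and $1-\cos\alpha=O(\sin^2\alpha)$ gives $F(v^k\AN{\alpha})=(1-\sin\alpha)F(v^k)+\sigma_k\mu_k\sin\alpha\,\bar e+\rho_k(\alpha)$ with $\|\rho_k(\alpha)\|\le K_r\sin^2\alpha$ uniformly in $k$, the uniform Lipschitz bound on $F'$ over the compact region swept by the arcs coming from (A2). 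Squaring, using $F(v^k)^\T\bar e=(z^k)^\T s^k=p\mu_k$ and $\|\bar e\|^2=p$, then the Cauchy--Schwarz inequality $p\mu_k^2=((z^k)^\T s^k)^2/p\le\|\D(z^k)s^k\|^2\le\phi(v^k)$ (as in the proof of Lemma~\ref{decrease}) together with $\sigma_k<\tfrac12$, yields after multiplication by $\gamma_k m_0/\Phi_0$
\[
\gamma_k\frac{m_0}{\Phi_0}\,\phi(v^k\AN{\alpha})\ \le\ (1-\sin\alpha)\,b_k\bigl(1-(1-2\sigma_k)\sin\alpha\bigr)+K_3\sin^2\alpha .
\]

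Subtracting, the $(1-\sin\alpha)b_k$ contributions recombine into a nonnegative multiple of $\sin\alpha$, so
\[
\hat m_k(\alpha)\ \ge\ (1-\sin\alpha)(1-2\sigma_k)\,b_k\sin\alpha+\sigma_k\mu_k\sin\alpha-K_4\sin^2\alpha\ \ge\ \sin\alpha\bigl(\bar\sigma\,\underline{\mu}-K_4\sin\alpha\bigr),
\]
where the first term is $\ge 0$ since $\sigma_k<\tfrac12$, $b_k\ge 0$, $\alpha\le\pi/2$, and the last step uses $\sigma_k\ge\bar\sigma$, $\mu_k\ge\underline{\mu}$. Hence $\hat m_k(\alpha)\ge 0$ whenever $\sin\alpha\le\bar\sigma\,\underline{\mu}/K_4$, and $\bar\alpha:=\arcsin\!\bigl(\min\{1,\bar\sigma\,\underline{\mu}/K_4\}\bigr)>0$ works and depends only on fixed data; therefore $\hat\alpha_k\ge\bar\alpha$ for all $k$, i.e.\ $\{\hat\alpha_k\}$ is bounded below and away from zero.

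The main obstacle is the estimate on $\phi(v^k\AN{\alpha})$: it must decay essentially at the rate $(1-\sin\alpha)^2$, so that its $(1-\sin\alpha)b_k$ term cancels against the one from $\min_i z^k_i\AN{\alpha}s^k_i\AN{\alpha}$ — the weaker $(1-c\sin\alpha)$ rate that Lemma~\ref{decrease} supplies is not enough here — and for this the first-order Taylor remainder of $F$ must be controlled uniformly on the compact region traced by the arcs, which is where (A2) enters. The ensuing cancellation only goes through because of the two structural facts $\sigma_k<\tfrac12$ and $p\mu_k^2\le\phi(v^k)$; if $\sigma_k$ were allowed near $1$ the argument would break. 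A minor point is the inequality $\hat m_k(0)\ge 0$, which rests on Step~6 keeping $\{\gamma_k\}$ non-increasing and Step~7 accepting only step angles $\alpha_{k-1}\le\hat\alpha_{k-1}$.
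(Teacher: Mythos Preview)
Your argument is correct, and in fact it is tighter and more transparent than the paper's. The two proofs diverge precisely at the place you flag as the main obstacle: how to bound $\phi(v^k\AN{\alpha})$. The paper appeals to the Armijo-type inequality (\ref{conC1}), obtaining only
\[
\phi(v^k\AN{\alpha})\le\phi(v^k)\bigl(1-2\beta(1-\sigma_k)\sin\alpha\bigr),
\]
and then argues by induction that the resulting expression (\ref{inter3}) is positive for some $\alpha_k>0$, concluding uniformity at the end from the boundedness of the coefficients. You instead Taylor-expand $F$ along the arc, using $F'(v^k)\dot v^k=F(v^k)-\sigma_k\mu_k\bar e$ and absorbing both the $\ddot v^k(1-\cos\alpha)$ contribution and the second-order remainder into an $O(\sin^2\alpha)$ term; this yields the sharper estimate
\[
\phi(v^k\AN{\alpha})\le\phi(v^k)\bigl(1-(1-\sigma_k)\sin\alpha\bigr)^2+O(\sin^2\alpha),
\]
which factors so that the $(1-\sin\alpha)$ piece cancels exactly against the leading term of (\ref{compWise}). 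After the cancellation your first-order coefficient in $\sin\alpha$ is $\sigma_k\mu_k\ge\bar\sigma\,\underline\mu>0$ plus a manifestly nonnegative term, so a single explicit $\bar\alpha$ drops out with no induction.

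Your observation that the linear rate from (\ref{conC1}) is ``not enough here'' is on point: with only $\phi(v^k\AN{\alpha})\le\phi(v^k)(1-c\sin\alpha)$, the first-order coefficient after subtraction is $-\zeta_k+2b_k\beta(1-\sigma_k)+\sigma_k\mu_k$, which need not be positive (indeed, in the worst case $\zeta_k=b_k$ and $\beta<\tfrac12$ it is negative), so the paper's final ``Therefore'' does not follow from the displayed estimates alone. Your direct expansion is what actually closes this gap. One small remark: the inequality $\hat m_k(0)\ge 0$ that you invoke really uses that the accepted step satisfies $\hat m_{k-1}(\alpha_{k-1})\ge0$; this is the algorithm's intent (see the sentence preceding the definition of $\hat\alpha_k$), so your use of it is legitimate even though the formal definition of $\hat\alpha_k$ as a \emph{max} does not by itself guarantee $\hat m_{k-1}(\alpha)\ge0$ for all $\alpha\le\hat\alpha_{k-1}$.
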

\begin{proof}  

For each $k$, 
find $i$ such that  $z_i^1 s_i^1 = \min(\D(z^1) s^1)$, and let 
$\eta_1^k=  \dot{z}_i^k\ddot{s}_i^k+\ddot{z}_i^k\dot{s}_i^k$ and
$\eta_2^k= \ddot{z}_i^k\ddot{s}_i^k-\dot{z}_i^k\dot{s}_i^k$.
Since $\{\dot{v}^k\}$ and $\{\ddot{v}^k\}$ are bounded
due to Lemma~\ref{ettzTheorem},
the sequences $\{ | \eta_1^k | \}$ and $\{ | \eta_2^k | \}$
are also bounded.

The proof is based on induction.
For $k=1$, from (\ref{compWise}) and (\ref{conC1}), we have 
\begin{eqnarray}
& & \min (\D(z^1) s^1) - \frac{1}{2} \min (\D(z^0) s^0) \frac{\phi (v^1)}{\phi (v^0)}  \nonumber \\
& \ge & z_i^1 s_i^1 - \frac{1}{2} \min (z^0 s^0) 
[1- 2\beta (1-\sigma_0) \sin(\alpha_0)]  \nonumber \\
& \ge &  {z}_i^0  {s}_i^0 (1-\sin(\alpha_0)) +\sigma_0  {\mu_0} \sin(\alpha)
-\eta_1^0\sin(\alpha_0)(1-\cos(\alpha_0))
+\eta_2^0 (1-\cos(\alpha_0))^2  \nonumber \\
& & -\frac{1}{2} (z_i^0 s_i^0) [1- 2\beta (1-\sigma_0) \sin(\alpha_0)] \nonumber \\
& \ge & \frac{1}{2} {z}_i^0  {s}_i^0 - {z}_i^0  {s}_i^0\sin(\alpha_0)
+\sigma_0  {\mu_0} \sin(\alpha_0) -\eta_1^0\sin(\alpha_0)(1-\cos(\alpha_0)) \nonumber \\
& & +\eta_2^0 (1-\cos(\alpha_0))^2
+{z}_i^0  {s}_i^0 \beta (1-\sigma_0) \sin(\alpha_0).
\label{inter1}
\end{eqnarray}

Since $v^k \in \Omega(\epsilon)$, we know 
$z_i^k s_i^k \ge \frac{1}{2} \min(\D(z^0) s^0)
\frac{\phi(v^k)}{\phi(v^0)} \ge \frac{1}{2} \min(\D(z^0) s^0)
\frac{\epsilon}{\phi(v^0)} > 0$, therefore
there must be $\alpha_0 > 0 $ such that
the last express in (\ref{inter1}) is greater than zero.
Next, for $k>1$, assume that there exists $\alpha_{k-1} >0$ such that 
\begin{equation}
\min (\D(z^k) s^k) - \frac{1}{2} \min (\D(z^0) s^0) 
\frac{\phi (v^k)}{\phi (v^0)} > 0,
\label{inter2}
\end{equation}
then we show that there exists $\alpha_{k}>0$ such that 
\begin{equation}
\min (\D(z^{k+1}) s^{k+1}) - \frac{1}{2} \min (\D(z^0) s^0) 
\frac{\phi (v^{k+1})}{\phi (v^0)} > 0.
\nonumber
\end{equation}

From (\ref{compWise}) and (\ref{conC1}), it holds that 
\begin{eqnarray}
& & \min (\D(z^{k+1}) s^{k+1}) - \frac{1}{2} \min (\D(z^0) s^0) 
\frac{\phi (v^{k+1})}{\phi (v^0)}  \nonumber \\
& \ge & z_i^{k+1} s_i^{k+1} - \frac{1}{2} \min (\D(z^0) s_j^0) 
\frac{\phi (v^k)}{\phi (v^0)}[1- 2\beta (1-\sigma_k) \sin(\alpha_k)]  
  \nonumber \\
& \ge &  {z}_i^k  {s}_i^k (1-\sin(\alpha_k)) +\sigma_k  {\mu_k} \sin(\alpha_k)
-\eta_1^k\sin(\alpha_k)(1-\cos(\alpha_k))
+\eta_2^k (1-\cos(\alpha_k))^2  \nonumber \\
& & -\frac{1}{2} \min (\D(z^0) s^0)  
\frac{\phi (v^k)}{\phi (v^0)}[1- 2\beta (1-\sigma_k) \sin(\alpha_k)] 
\label{inter3}
\end{eqnarray}
Since ${z}_i^k  {s}_i^k \ge \min (\D(z^k) s^k) \ge \frac{1}{2} \min(\D(z^0) s^0)
\frac{\epsilon}{\phi(v^0)} > 0 $ and (\ref{inter2}), 
we can find $\alpha_k >0$ such
that the last express in (\ref{inter3}) is greater than zero.

We already know that  $\{z_i^k s_i^k\}$ and $\{\sigma_k\}$ are bounded below and away from zero, and $\{ |\eta_1^k|\}$ and $\{|\eta_2^k|\}$ are bounded 
due to Lemma~\ref{ettzTheorem}. Therefore, $\{\hat{\alpha}_k\}$  
is bounded below and away from zero.
\hfill \qed
\end{proof}

We are now ready to prove the convergence of Algorithm~\ref{mainAlgo1}.
From Lemmas~\ref{barAlpha}, \ref{checkAlpha}, \ref{hatAlpha},  
we already establish that $\{\alpha_k\}$ is bounded below and away from zero.

\begin{theorem}\label{global} 	
Assume (B1)-(B4) hold and $\phi(v^0)$ is bounded. Then, 
	(i) for all $k \ge 0$, the sequence $\{\phi(v^k)\}$ 
decreases in a constant rate, and 
	(ii) the algorithm terminates in finite iterations and
the finds an $\epsilon$-approximate solution of the 
problem \eqref{NP}.
\end{theorem}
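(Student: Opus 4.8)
The plan is to stack the three ``step-angle'' lemmas (Lemmas~\ref{barAlpha}, \ref{checkAlpha}, \ref{hatAlpha}) on top of the descent lemma (Lemma~\ref{decrease}) to obtain a geometric decrease of $\phi(v^k)$, from which finite termination follows immediately. The one thing that must be settled before any of those lemmas can be used is that the iterates actually remain in $\Omega(\epsilon)$ until the stopping test in Step~1 fires, since Lemmas~\ref{boundAboveBelow}--\ref{hatAlpha} are all conditional on $\{v^k\}\subset\Omega(\epsilon)$.

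First I would show, by induction on $k$, that as long as the algorithm has not stopped, $v^j\in\Omega(\epsilon)$ for all $j\le k$. The base case $v^0\in\Omega(\epsilon)$ holds since $\epsilon<\phi(v^0)\le\phi(v^0)$ (the left inequality because Step~1 did not stop), and since $(s^0,z^0)\in\R_{++}^{3p}$ gives $\min(\D(s^0)z^0)>0$, so the neighbourhood inequality defining $\Omega(\epsilon)$ holds with the ratio $\phi(v^0)/\phi(v^0)=1$ and the factor $\tfrac12$. For the inductive step, with $v^0,\dots,v^k\in\Omega(\epsilon)$ the constants produced in Lemmas~\ref{barAlpha}, \ref{checkAlpha}, \ref{hatAlpha} (which depend only on $\phi(v^0),\epsilon,\delta,\beta,\bar\sigma$ and the uniform bounds granted by (B1)--(B4) over $\Omega(\epsilon)$, and not on $v^{k+1}$) give a fixed $\bar\alpha\in(0,\pi/2]$ with $\tilde\alpha_k,\check\alpha_k,\hat\alpha_k\ge\bar\alpha$, so $\alpha_k=\min\{\tilde\alpha_k,\hat\alpha_k,\check\alpha_k\}\ge\bar\alpha>0$ is a legitimate choice in Step~6. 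Then $\alpha_k\le\check\alpha_k$ and $\alpha_k\le\tilde\alpha_k$ make the Armijo-type condition \eqref{cond1} hold at $\alpha_k$, so Lemma~\ref{decrease} gives $\phi(v^{k+1})\le\phi(v^k)(1-2\beta(1-\sigma_k)\sin\alpha_k)<\phi(v^k)\le\phi(v^0)$; and $\alpha_k\le\hat\alpha_k$ gives $\hat m_k(\alpha_k)\ge0$, i.e. (using $v^k\AN{\alpha_k}=v^{k+1}$ and $\gamma_k\ge\tfrac12$) $\min(\D(z^{k+1})s^{k+1})\ge\tfrac12\min(\D(z^0)s^0)\,\phi(v^{k+1})/\phi(v^0)$; and Lemma~\ref{eqwz} keeps $w^{k+1}=z^{k+1}$ while \eqref{analyticArc} keeps $(w^{k+1},s^{k+1},z^{k+1})\in\R_{++}^{3p}$. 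Together with $\epsilon\le\phi(v^{k+1})$ (the test did not fire) this yields $v^{k+1}\in\Omega(\epsilon)$, closing the induction.

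With $\{v^k\}\subset\Omega(\epsilon)$ established, Lemmas~\ref{barAlpha}, \ref{checkAlpha}, \ref{hatAlpha} now apply unconditionally and give a single $\bar\alpha>0$ with $\alpha_k\ge\bar\alpha$ for all $k$; since also $1-\sigma_k>\tfrac12$, Lemma~\ref{decrease} yields $\phi(v^{k+1})\le(1-\beta\sin\bar\alpha)\,\phi(v^k)$ for every $k$, with $\rho:=\beta\sin\bar\alpha\in(0,1)$ (it is $<1$ because $1-2\beta(1-\sigma_k)\sin\alpha_k>0$ in \eqref{conC1}, and $>0$ since $\beta,\bar\alpha>0$). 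Iterating, $\phi(v^k)\le(1-\rho)^k\phi(v^0)$, which is part~(i). For part~(ii), $\phi(v^0)$ finite and $0<1-\rho<1$ force $(1-\rho)^k\phi(v^0)\to0$, so there is a finite $K\le\lceil\log(\epsilon/\phi(v^0))/\log(1-\rho)\rceil$ with $\phi(v^K)\le\epsilon$, and the algorithm stops at Step~1 no later than iteration $K$. At the terminal iterate $v^k$ one has $\|F(v^k)\|^2=\phi(v^k)\le\epsilon$ with $(w^k,s^k,z^k)\in\R_{++}^{3p}$, i.e. an $\epsilon$-approximate KKT point of \eqref{NP1}, equivalently an $\epsilon$-approximate solution of \eqref{NP}.

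I expect the main obstacle to be the induction in the second paragraph, and specifically the need to argue non-circularity: the lower bounds in Lemmas~\ref{barAlpha}--\ref{hatAlpha} must be recognized as genuinely uniform over $\Omega(\epsilon)$ (so that $v^{k+1}\in\Omega(\epsilon)$ is deduced using only data referencing $v^0,\dots,v^k$), together with the bookkeeping that each of the three defining inequalities of $\Omega(\epsilon)$ is enforced by exactly one of the three ingredients $\tilde\alpha_k,\hat\alpha_k,\check\alpha_k$ of $\alpha_k$. Once that is in place, parts~(i) and~(ii) reduce to a one-line geometric-series estimate.
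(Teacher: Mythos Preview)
Your proposal is correct and follows essentially the same route as the paper: invoke Lemmas~\ref{barAlpha}, \ref{checkAlpha}, \ref{hatAlpha} to get a uniform lower bound $\bar\alpha>0$ on $\alpha_k$, then apply Lemma~\ref{decrease}/\eqref{conC1} to obtain geometric decrease of $\phi(v^k)$ and hence finite termination. Your argument is in fact more careful than the paper's, which takes the hypothesis $\{v^k\}\subset\Omega(\epsilon)$ of those lemmas for granted; your explicit induction showing that each $v^{k+1}$ lands in $\Omega(\epsilon)$ (via the three ingredients $\tilde\alpha_k,\hat\alpha_k,\check\alpha_k$ of $\alpha_k$) closes a gap the paper leaves implicit, and your explicit iteration bound is a bonus.
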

\begin{proof}
Since $\{\alpha_k\}$ is bounded below and away from zero,
there must be $\bar{\alpha} > 0$ such that 
$\check{\alpha}_k \ge \alpha_k \ge \bar{\alpha} > 0$.
This shows that $\{\phi(v^k)\}$ decreases in a constant rate 
due to \eqref{conC1}.
Since $\phi(v^0)$ is bounded, and $\{\phi(v^k)\}$ decreases 
in a constant rate, it needs only a finite iterations $K$
to $\phi(v^K) \le \epsilon$
with $(w^K, s^K, z^K) \in \R_+^{3p}$.
\hfill \qed
\end{proof}

\begin{remark}
Although assumptions similar to \cite{ettz96} are made
in (B1)-(B4), we obtained a stronger finite convergence
result than \cite{ettz96}.
\end{remark}

\section{Numerical Experiments}\label{sec:experiments}

We conducted numerical experiments to compare
the performance of the proposed arc-search algorithm (Algorithm~\ref{mainAlgo1}) and a line-search algorithm.
A framework of the line-search algorithm we used 
in the numerical experiments is given as follows.
The main difference from Algorithm~\ref{mainAlgo1}
is that Algorithm~\ref{mainAlgo_line} uses only $\dot{v}$ and not $\ddot{v}$.
\begin{algorithm} 	\label{mainAlgo_line}
{\bf (an infeasible line-search type interior-point algorithm for nonlinear 
programming problems)} \newline
	\indent Parameters: $\epsilon>0$, $\delta>0$,  
$\beta \in (0, \frac{1}{2}]$, and $\gamma_{-1}=1$.
\newline\indent Initial point: $v^0 = (x^0,y^0,w^0,s^0,z^0)$ 
such that $(w^0,s^0,z^0)>0$ and $w^0=z^0$.
\newline
\newline
\indent {\bf for} iteration $k=0,1,2,\ldots$
\begin{itemize}
	\item[] Step 1:  If $\phi(v^k) \le \epsilon$, stop.
	\item[] Step 2: Calculate $\nabla_x L(v^k)$, $h(x^k)$, $g(x^k)$, 
	$\nabla_x^2 L(v^k)$, $\nabla_x h(x^k)$, and $\nabla_x g(x^k)$.
	\item[] Step 3: Select $\sigma_k$ such that 
    $\bar{\sigma} \le \sigma_k < \frac{1}{2}$
	and let  
	$\dot{v}^k = (\dot{x}^k,\dot{y}^k,\dot{w}^k,\dot{s}^k,\dot{z}^k)$
	of the solution of (\ref{firstOrder}) with ${v} = v^k$.
	\item[] Step 4:  Choose $\gamma_k$ such that $\frac{1}{2} \le \gamma_{k} 
	\le \gamma_{k-1}$, and find  appropriate 
	$\alpha_k > 0$ using $\gamma_k$
     such that $w^{k+1} \in \R_{++}^p, s^{k+1} \in \R_{++}^p$
    and $\phi(v^{k+1}) < \phi(v^k)$ hold.
	\item[] Step 5:  Update $v^{k+1} =  v^k  + \alpha_k \dot{v}^k$.
\end{itemize}
\indent\indent {\bf end (for)} 
	\hfill \qed
\end{algorithm}

A main objective of the numerical experiments in this paper is to observe numerical behaviors of the arc-search algorithm (Algorithm~\ref{algorithm:arc-search}) compared with 
the line-search algorithm (Algorithm~\ref{mainAlgo_line})
Existing packages often employ many techniques 
to improve numerical stability or computation time.
However, such techniques might prevent us from focusing the difference of two algorithms
and 
implementing such techniques should be separated as a future work,
therefore, we did not include existing packages in the numerical experiments.

For the test problems, we used the CUTEst test set \cite{gould2015cutest}.  According to the types of problems, we classified the entire set 
into four types; LP (linear programming) problems, QP (quadratic programming) problems,  QCQP (quadratically-constrained quadratic programming) problems and Others. 
Here, the problems in ``Others''  include a function whose degree is higher than 2.
In the numerical experiments, we excluded LP and QP types,
since the proposed arc-search algorithm in this paper is designed for NLPs,
and existing arc-search algorithms \cite{yang13, yang11, yy18}  
proposed for LP and QP types are more effective for these types.
The variable size $n$ in QCQP and Others ranges from $2$ to $2002$, and the total number of constraints in $h,g$  from $2$ to $1722$. 

The commands of the CUTEst provides the gradient vectors
and the Hessian matrices, but not the third derivatives.
Therefore, we used numerical differentiation for computing $\nabla_x^3 L(v) $, for example, we computed
\begin{equation}
\nabla_{x_i}(\nabla_x^2 L(x, y, w,s,z)) = \frac{\nabla_x^2 L(x+\hat{\epsilon} e_i, y, w,s,z) - \nabla_x^2 L(x, y, w,s,z)}{\hat{\epsilon}} \label{eq:num-diff}
\end{equation}
where $e_i$ is the $i$th unit vector and $\hat{\epsilon}$ is a small positive number.
In the numerical experiments, we set $\hat{\epsilon} = 10^{-4}$.

For the parameters, we set 
$\delta = 10^{-3}$ and 
$\gamma_k = \frac{1}{2}, \sigma_k = \frac{1}{8}\min\{1, \phi(v^k)p/(\mu^k)^2\}$ for 
all $k$.
We stop the algorithms when the deviation from the KKT conditions gets 
smaller than a tolerance, $\phi(v^k) \le 10^{-8}$, 
or the iteration number exceeds a limit, $k \ge 1000$. 

\subsection{Numerical Results}\label{sec:results}

We compare the number of iterations and the computation time 
with 
problems that are solved by
both Algorithm~\ref{mainAlgo1} and Algorithm~\ref{mainAlgo_line},
The detailed tables of the numerical results are put in \ref{sec:detailedResults}.
For summarizing the numerical results, we utilize 
the performance profiling proposed in \cite{gould2016note}.
In the performance profiling for the computation time, 
the vertical axis $P(r_{p,s} \le \tau)$ is the proportion of the problems 
in the numerical experiments
for which $r_{p,s}$ is at most $\tau$, where $r_{p,s}$  is the ratio of the computation time
of the algorithm against
the shorter computation time among the two algorithms.
Simply speaking, the algorithm that approaches to 1 at smaller $\tau$ is better.

Figure~\ref{10-01_pp} shows the performance profile
of Algorithm~\ref{mainAlgo1} and Algorithm~\ref{mainAlgo_line}.
We observe that the number of iterations is less than that of the line-search algorithm.
We can consider that the proposed arc-search algorithm approximates the central path better than the line-search algorithm.
In contrast, in the viewpoint of the computation time, the proposed arc-search algorithm
consumed a longer time.
We found that the main bottleneck in Algorithm~\ref{mainAlgo1}
was the right-hand side of (\ref{secondOrder}), in particular, the computation on 
$ (\nabla_x^3 L(v))\dot{x} \dot{x}$,
$ (\nabla_x^2 h(x))\dot{y} \dot{x}$,
$ (\nabla_g^2 h(x))\dot{z} \dot{x}$,
$ (\nabla_x^2 h(x))\dot{x} \dot{x}$,
and      
$(\nabla_x^2 g(x))\dot{z} \dot{x})$.
We will discuss these higher-order derivatives in Section~\ref{sec:discussions}.

\begin{figure}[htb]
     \begin{minipage}{0.48\textwidth}
      \iffigure
      \iffigureeps
       \includegraphics[width=7cm]{iterAll_10-01.eps}
       \else
       \includegraphics[width=7cm]{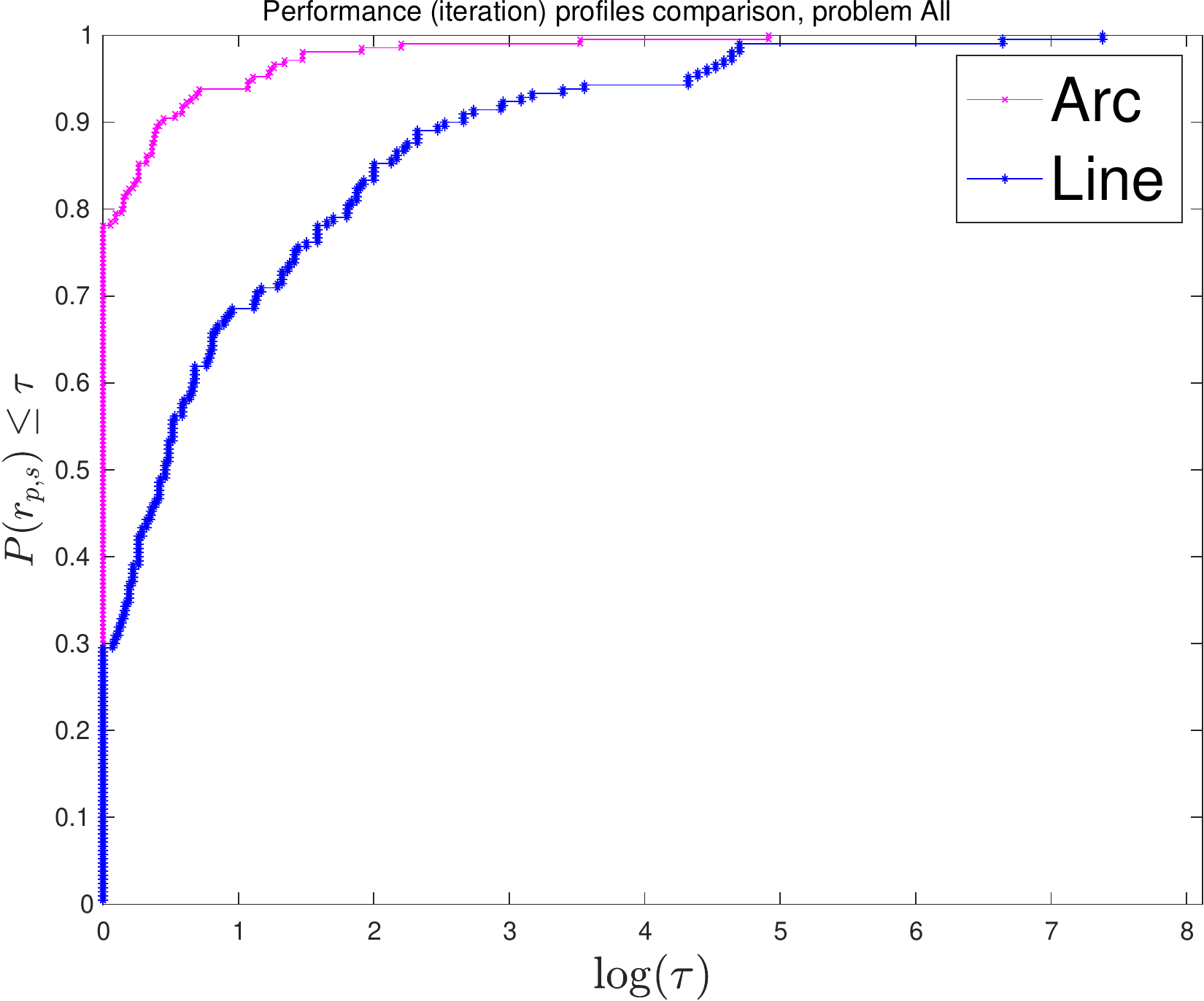}
       \fi
        \fi
        \end{minipage}
	\begin{minipage}{0.48\textwidth}
    \iffigure
      \iffigureeps
    \includegraphics[width=7cm]{time_AllAll_10-01.eps}
    \else
    \includegraphics[width=7cm]{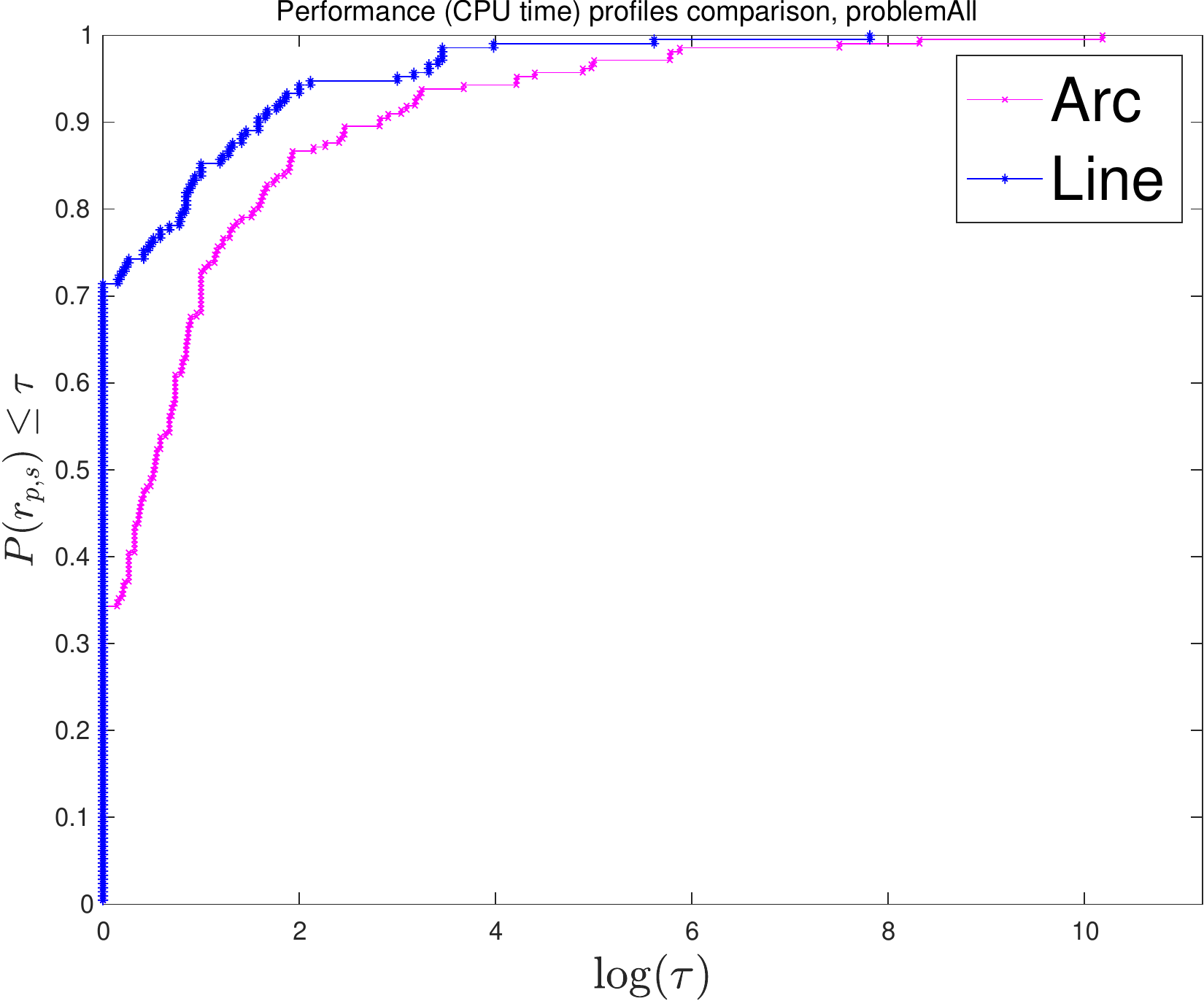}
    \fi
    \fi
    \end{minipage}
    \caption{Performance profiles of  the number of iterations (left) and the computation time (right) for all solvable problems.} 
    \label{10-01_pp}
\end{figure}

Figures~\ref{10-01_qcqp} illustrates
the performance profile for QCQPs.
This result indicates that the computation time of the proposed 
arc-search algorithm is competitive with the line-search algorithm
in QCQPs.
The degrees of the functions in QCQPs are at most 2,
therefore, the approximation with the ellipse fits the central path well
and the number of iterations is much smaller than the line-search algorithm.

\begin{figure}[htb]
    \begin{minipage}{0.48\textwidth}
        \iffigure
              \iffigureeps        
         \includegraphics[width=7cm]{iterQCQP_10-01.eps}
         \else
        \includegraphics[width=7cm]{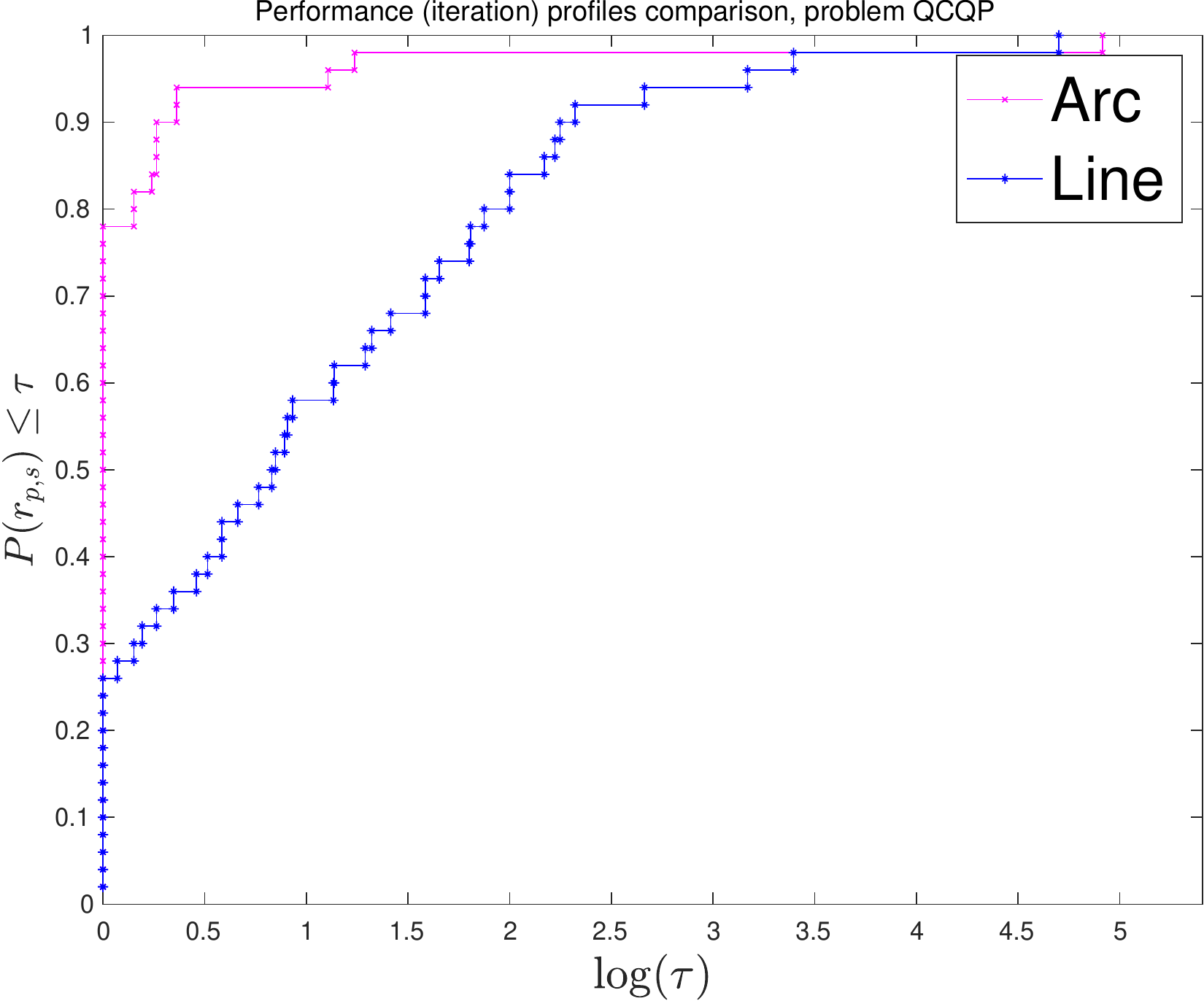}
        \fi
        \fi
    \end{minipage}
    \begin{minipage}{0.48\textwidth}
        \iffigure
              \iffigureeps        
        \includegraphics[width=7cm]{time_AllQCQP_10-01.eps}
        \else
        \includegraphics[width=7cm]{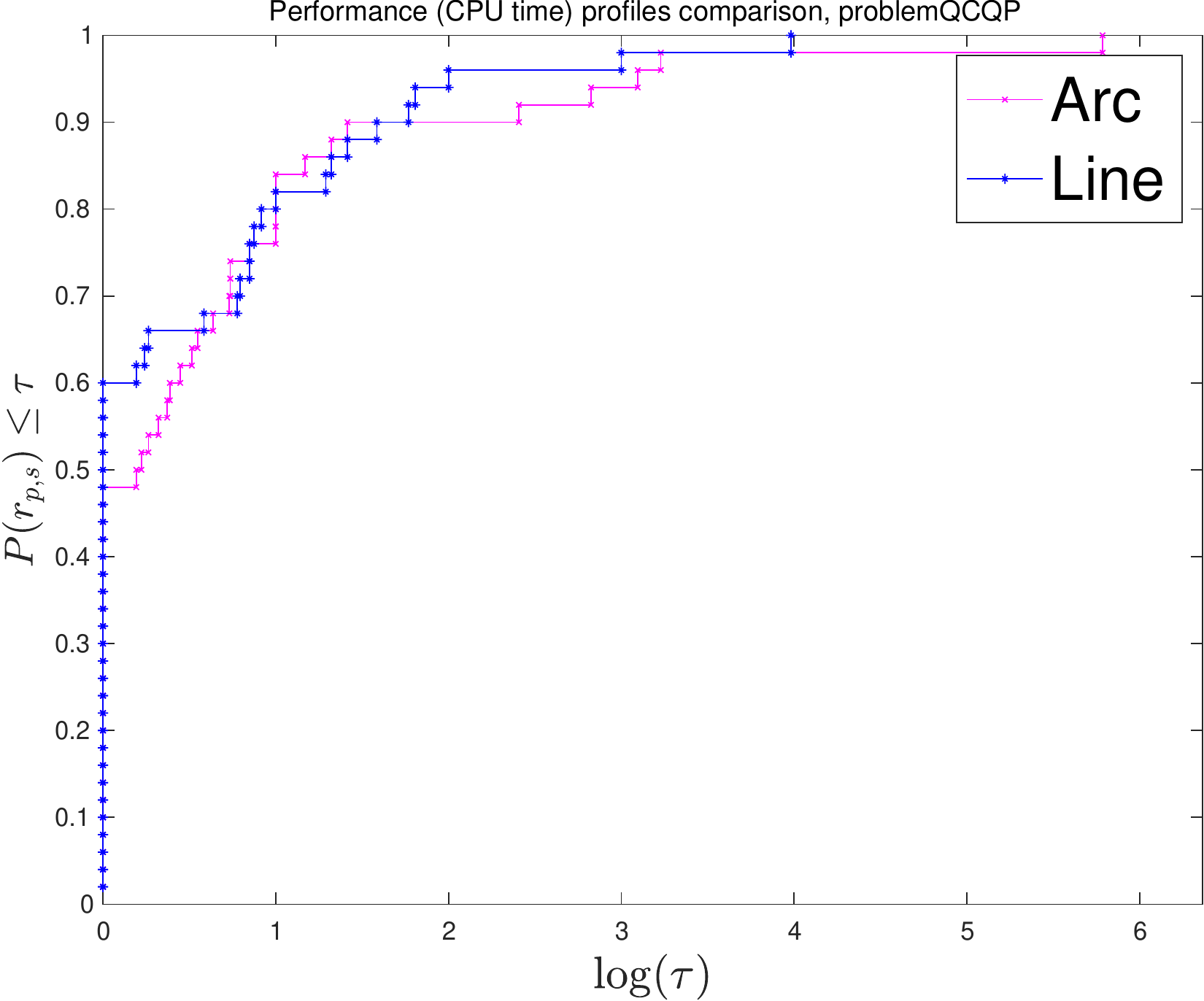}
        \fi
        \fi
    \end{minipage}
    \caption{Performance profiles of  the number of iterations (left) and the computation time (right) for QCQP problems} \label{10-01_qcqp}
\end{figure}

\subsection{High-order derivatives}\label{sec:discussions}
As pointed out above, 
the main bottleneck of the proposed arc-search algorithm
is the computation of the high-order derivatives;
$ (\nabla_x^3 L(v))\dot{x} \dot{x}$,
$ (\nabla_x^2 h(x))\dot{y} \dot{x}$,
$ (\nabla_g^2 h(x))\dot{z} \dot{x}$,
$ (\nabla_x^2 h(x))\dot{x} \dot{x}$,
and $(\nabla_x^2 g(x))\dot{z} \dot{x})$.
However, these  higher-order derivatives appear only in 
the right-hand side of (\ref{secondOrder}) for obtaining $\ddot{v}$.
Since the second-order approximation $\ddot{v}$ gives a less influence on  ${v}\AN{\alpha}$
than the first-order approximation $\dot{v}$  when $\alpha$ is small,
we can expect that small deviations in the computation of $\ddot{v}$ would not affect the approximation of ${v}$ so much. In addition, we can remove the effect of numerical errors in the numerical differentiations like (\ref{eq:num-diff}).
Based on these intuitions,
we examine another approximation with $\ddot{\ddot{v}} = (\ddot{\ddot{x}}, \ddot{\ddot{y}}, \ddot{\ddot{w}}, \ddot{\ddot{s}}, \ddot{\ddot{z}})$ 
defined as the solution of the following system in which we ignored
the higher-order derivatives of (\ref{secondOrder}):
{\small
	\begin{eqnarray}
	\hspace{-2.2cm}
	\left[ \begin{array}{ccccc}
	\nabla_x^2 L(v) & \nabla h(x) & -\nabla g(x) & 0  & 0 \\
	\left( \nabla h(x)\right)^{\T} & 0 & 0 & 0 & 0  \\
	\left( \nabla g(x)\right)^{\T} & 0 & 0 & -I  & 0 \\
	0  & 0  &  I  &  0  &  -I  \\
	0 & 0 & 0  & \D({z}) & \D({s})
	\end{array} \right]
	\left[ \begin{array}{c}
	\ddot{\ddot{x}} \\ \ddot{\ddot{y}} \\ \ddot{\ddot{w}} \\ \ddot{\ddot{s}} \\ \ddot{\ddot{z}}
	\end{array} \right]
	& = & \left[ \begin{array}{l}
0 \\
0 \\
0 \\
0  \\
	-2 \D(\dot{z}) \dot{s} 
	\end{array} \right].
	\nonumber	\label{ddotddotv}
	\end{eqnarray}
}

Figure~\ref{19-04_pp} compares the arc-search algorithm with $\ddot{\ddot{v}}$
and the line-search algorithm (Algorithm~\ref{mainAlgo_line}) using the performance profiling.
In the viewpoint of the number of iterations, the arc-search algorithm keeps its superiority.
In addition, the arc-search algorithm solves the problems in a shorter time than the line-search algorithm, since we skip the main bottlenecks.

\begin{figure}[htb]
    \begin{minipage}{0.48\textwidth}
        \iffigure
              \iffigureeps        
        \includegraphics[width=7cm]{iterAll_19-04.eps}
        \else
        \includegraphics[width=7cm]{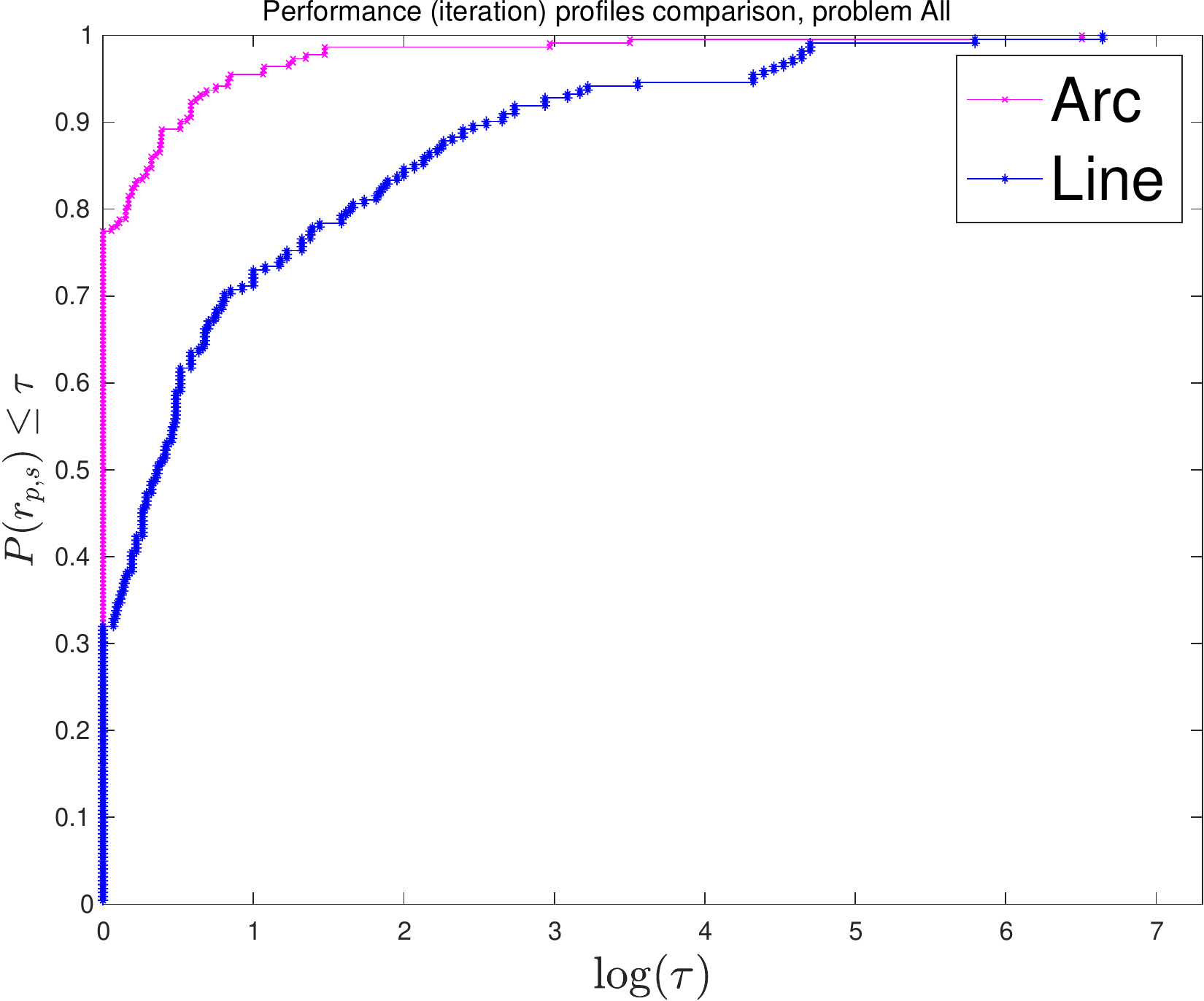}
        \fi
        \fi
    \end{minipage}
    \begin{minipage}{0.48\textwidth}
        \iffigure
              \iffigureeps        
         \includegraphics[width=7cm]{time_AllAll_19-04.eps}
         \else
        \includegraphics[width=7cm]{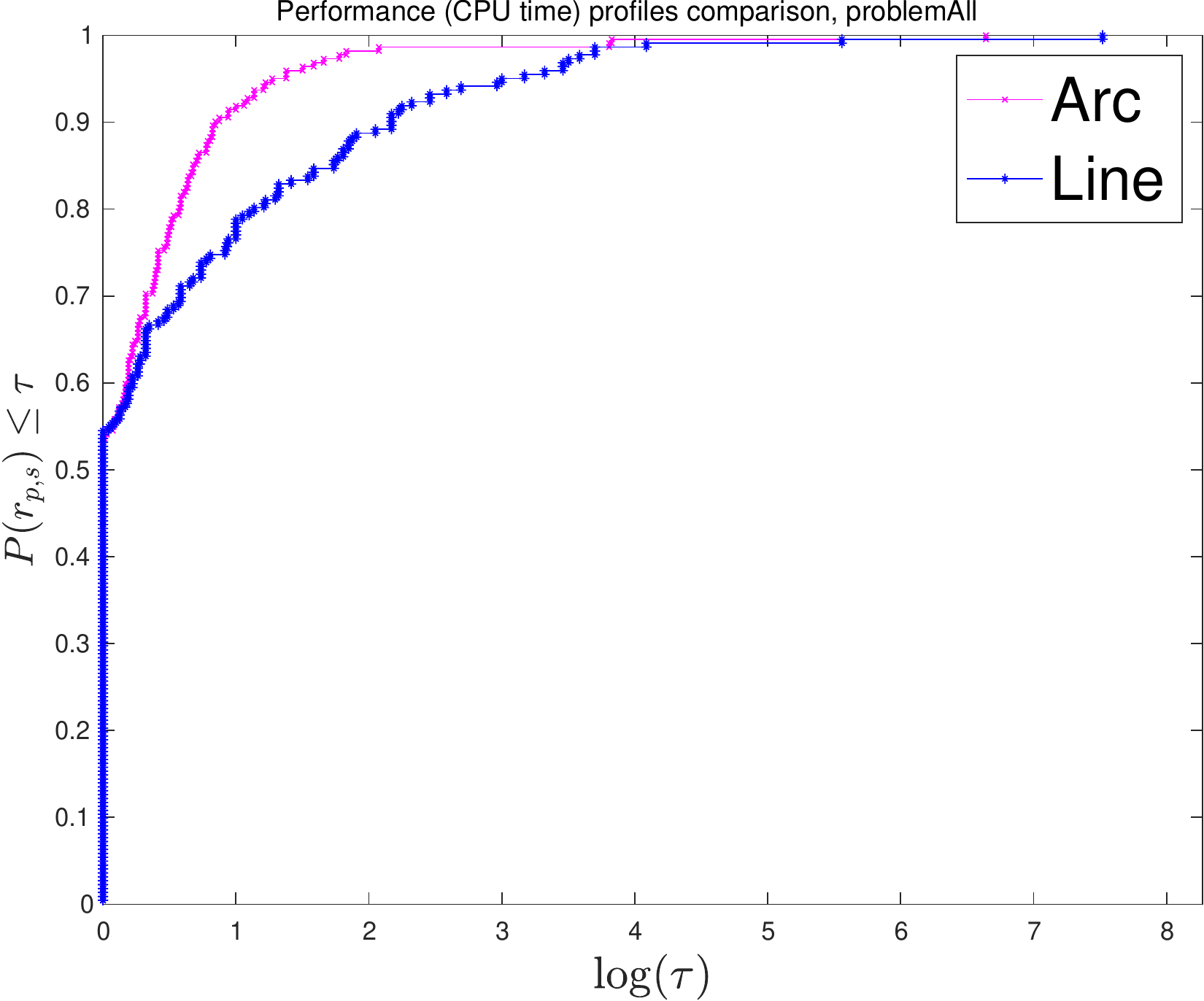}
        \fi
        \fi
    \end{minipage}
    \caption{Performance profiles of  the number of iterations (left) and the computation time (right) with the use of $\ddot{\ddot{v}}$} \label{19-04_pp}
\end{figure}

Since $\ddot{\ddot{v}}$ can not draw the ellipse ${\cal E}$ exactly, we cannot apply the same theoretical developments in the previous section. However, these numerical results give promising insights for  further improvements on the arc-search algorithm.

\section{Conclusions}\label{sec:conclusion}
In this paper, we extend the arc-search algorithm, which approximates the central path with an arc of the ellipse, for NLPs and also discuss the convergence of the proposed algorithm.
From the results of numerical experiments, the arc-search algorithm succeeded in reducing the number of iterations compared with the line-search algorithm.

As a future work, we should focus the computation time reduction of the arc-search algorithm.
In particular, we expect the drop of the high-order derivatives in the computation of $\ddot{v}$ will bring us an enhancement of the algorithm as observed in Section~\ref{sec:discussions},
though the deviation from the arc due to the drop should be theoretically addressed.
We should also incorporate some implementation techniques to improve the numerical stability for NLPs.

\bibliographystyle{siam}     
\bibliography{scholar}

\appendix
\def\thesection{Appendix \Alph{section}}
\section{Derivatives}\label{section:derivatives}
In this section, we give notation related to derivatives.
The Hessian matrix of $f : \R^n \to \R$ is 
\begin{align*}
\nabla^2 f(x) = \left[ \begin{array}{cccc}
\frac{\partial^2 f}{\partial x_1 \partial x_1}  &
\frac{\partial^2 f}{\partial x_1 \partial x_2} &
\cdots &
\frac{\partial^2 f}{\partial x_1 \partial x_n} \\
\vdots  & \vdots  & \ddots  & \vdots \\
\frac{\partial^2 f}{\partial x_n \partial x_1} &
\frac{\partial^2 f}{\partial x_n \partial x_2} &
\cdots &
\frac{\partial^2 f}{\partial x_n \partial x_n} 
\end{array} \right] \in \R^{n \times n}.
\end{align*}
The Jacobian for $h : \R^{n} \to \R^m$ is 
\begin{align*}
\nabla h(x)  = \left[ \begin{array}{cccc}
\frac{\partial  h_1}{\partial x_1 }  &
\frac{\partial  h_2}{\partial x_1 }  &
\cdots &
\frac{\partial  h_m}{\partial x_1 }  \\
\vdots  & \vdots  & \ddots  & \vdots \\
\frac{\partial  h_1}{\partial x_n }  &
\frac{\partial  h_2}{\partial x_n }  &
\cdots &
\frac{\partial  h_m}{\partial x_n } 
\end{array} \right] 
=[\nabla h_1(x),   \cdots, \nabla  h_m(x) ] 
\in \R^{n \times m}.
\end{align*}
The Jacobian for $g : \R^{n} \to \R^p$ is
\begin{align*}
\nabla g(x)  = \left[ \begin{array}{cccc}
\frac{\partial  g_1}{\partial x_1 }  &
\frac{\partial  g_2}{\partial x_1 }  &
\cdots &
\frac{\partial  g_p}{\partial x_1 }  \\
\vdots  & \vdots  & \ddots  & \vdots \\
\frac{\partial  g_1}{\partial x_n }  &
\frac{\partial  g_2}{\partial x_n }  &
\cdots &
\frac{\partial  g_p}{\partial x_n }
\end{array} \right] 
=[\nabla g_1(x),   \cdots, \nabla  g_p(x) ]  
\in \R^{n \times p}.
\end{align*}

For the right-hand-side of (\ref{secondOrder}), we use
\begin{eqnarray*}
 \nabla_x^3 L(x,y,z)\dot{x} \dot{x} 
&=& 
\frac{\partial \left( \frac{\partial^2 L(x,y,z)}{\partial x^2} \dot{x} \right) }
{\partial x} \dot{x} \nonumber 
 =  \sum_{i=1}^n  \dot{x}_i \frac{\partial}{\partial x}
\left[ \begin{array}{c}
\frac{\partial^2 L(x,y,z)}{\partial x_1 \partial x_i} \\
\vdots \\
\frac{\partial^2 L(x,y,z)}{\partial x_n \partial x_i}
\end{array} \right] \dot{x} \\
 \nabla_x^2 h(x)\dot{y} \dot{x} 
&=&
\frac{\partial \left( \frac{\partial  h(x)}{\partial x } \dot{y} \right) }
{\partial x} \dot{x} 
 =  \sum_{i=1}^m \dot{y}_i \frac{\partial}{\partial x}
\left[ \begin{array}{c}
\frac{\partial  h_i(x)}{\partial x_1 } \\
\vdots \\
\frac{\partial  h_i(x)}{\partial x_n} 
\end{array} \right] \dot{x}
=  \sum_{i=1}^m \dot{y}_i
\left( \nabla_x^2 h_i(x) \right)  \dot{x} \\
 \nabla_x^2g(x)\dot{z} \dot{x} 
&=&
\frac{\partial \left( \frac{\partial  g(x)}{\partial x } \dot{z} \right) }
{\partial x} \dot{x} 
 =  \sum_{i=1}^n \dot{z}_i \frac{\partial}{\partial x}
\left[ \begin{array}{c}
\frac{\partial  g_i(x)}{\partial x_1 } \\
\vdots \\
\frac{\partial g_i(x)}{\partial x_n} 
\end{array} \right] \dot{x}
 = \sum_{i=1}^n \dot{z}_i
\left( \nabla_x^2 g_i(x) \right)  \dot{x}  \\
\nabla_x^2 h(x)^{\T} \dot{x}  \dot{x}
&=&
\left( \frac{\partial \left( \left( \frac{\partial  h(x)}
	{\partial x } \right)^{\T} \dot{x} \right) }{\partial x} \right)^{\T} \dot{x}
=  
\left[ \begin{array}{c}
\dot{x}^{\T} \left( \nabla_x^2 h_1(x) \right)  \dot{x} \\
\vdots \\
\dot{x}^{\T} \left( \nabla_x^2 h_m(x) \right)  \dot{x} 
\end{array} \right] \\
\nabla_x^2 g(x)^{\T} \dot{x}  \dot{x}
&=&
\left( \frac{\partial \left( \left( \frac{\partial g(x)}
	{\partial x } \right)^{\T} \dot{x} \right) }{\partial x} \right)^{\T} \dot{x}
=  
\left[ \begin{array}{c}
\dot{x}^{\T} \left( \nabla_x^2 g_1(x) \right)  \dot{x} \\
\vdots \\
\dot{x}^{\T} \left( \nabla_x^2 g_p(x) \right)  \dot{x} 
\end{array} \right].
\end{eqnarray*}

\section{The largest step angle}\label{section:ComputeAlpha}

In this section, we give analytical forms 
to compute the largest $\alpha_{w_i}$ and $\alpha_{s_i}$ for each $i$ in (\ref{alpha}).
For simplicity, here, we drop the index $i$ and the iteration number $k$; 
for example, $w_i^k$ is simply written as $w$.
For (\ref{key1a}), we should have
\begin{align*}
w\AN{\alpha}=w - \dot{w}\sin(\alpha)+\ddot{w}(1-\cos(\alpha)) 
\ge \delta w,
\end{align*}
or equivalently,
\begin{equation}
w - \delta w +\ddot{w}
\ge \dot{w}\sin(\alpha)+\ddot{w}\cos(\alpha).
\label{alphai}
\end{equation}
We split this computation into seven cases
by the signs of $\dot{w}$ and $\ddot{w}$.

\noindent{\it Case 1 ($\dot{w}=0$ and $\ddot{w}\ne 0$)}:

If $\ddot{w} \ge -(1-\delta) w$, 
then $w\AN{\alpha} \ge \delta w$ holds
for $\alpha \in [0, \frac{\pi}{2}]$.
If $\ddot{w} \le -(1-\delta)w < 0$, to meet (\ref{alphai}),
we must have 
$\cos(\alpha) \ge \frac{w-\delta w +\ddot{w}}{\ddot{w}} \ge 0$, or,
$\alpha \le \cos^{-1}\left( \frac{w-\delta w +\ddot{w}}
{\ddot{w}} \right)$. Therefore,
\begin{equation*}
\alpha_{w} = \left\{
\begin{array}{ll}
\frac{\pi}{2} & \quad \mbox{if $w-\delta w +\ddot{w} \ge 0$} \\
\cos^{-1}\left( \frac{w-\delta w +\ddot{w}}
{\ddot{w}} \right) & \quad 
\mbox{if $w-\delta w +\ddot{w} \le 0$}.
\end{array}
\right.
\label{case1a}
\end{equation*}
\noindent{\it Case 2 ($\ddot{w}=0$ and $\dot{w}\ne 0$)}:

If $\dot{w} \le (1 - \delta) w$, then $w\AN{\alpha} \ge \delta w$ holds for any 
$\alpha \in [0, \frac{\pi}{2}]$.
If $\dot{w} \ge (1 - \delta) w > 0$,  to meet (\ref{alphai}),
we must have 
$\sin(\alpha) \le \frac{w-\delta w }{\dot{w}}$, or
$\alpha \le \sin^{-1}\left( \frac{w-\delta w }
{\dot{w}} \right)$. Therefore,
\begin{equation*}
\alpha_{w} = \left\{
\begin{array}{ll}
\frac{\pi}{2} & \quad \mbox{if $\dot{w} \le w-\delta w$} \\
\sin^{-1}\left( \frac{w-\delta w }
{\dot{w}} \right) & \quad \mbox{if $\dot{w} \ge w-\delta w$}.
\end{array}
\right.
\label{case2a}
\end{equation*}
\noindent{\it Case 3 ($\dot{w}>0$ and $\ddot{w}>0$)}:

Let 
$\beta = \sin^{-1} \left( \frac{\ddot{w} }
{\sqrt{\dot{w}^2+\ddot{w}^2}} \right)$.
We can express
$\dot{w}=\sqrt{\dot{w}^2+\ddot{w}^2}\cos(\beta)$ and
$\ddot{w}=\sqrt{\dot{w}^2+\ddot{w}^2}\sin(\beta)$.
Then, (\ref{alphai})
can be rewritten as 
\begin{equation}
w-\delta w + \ddot{w} \ge \sqrt{\dot{w}^2+\ddot{w}^2}
\sin(\alpha + \beta).
\label{alphai1}
\end{equation}
If $\ddot{w} + w- \delta w \ge \sqrt{\dot{w}^2+\ddot{w}^2}$, 
then $w\AN{\alpha} \ge \delta w$ holds for any $\alpha \in [0, \frac{\pi}{2}]$.
If $\ddot{w} + w- \delta w \le \sqrt{\dot{w}^2+\ddot{w}^2}$,  
to meet (\ref{alphai1}), we must have 
$\sin(\alpha + \beta) \le \frac{w-\delta w + \ddot{w}}
{\sqrt{\dot{w}^2+\ddot{w}^2}}$, or
$\alpha + \beta \le \sin^{-1}\left( \frac{w-\delta w + \ddot{w} }
{\sqrt{\dot{w}^2+\ddot{w}^2}} \right)$. Therefore,
\begin{equation*}
\alpha_{w} = \left\{
\begin{array}{ll}
\frac{\pi}{2} & \quad \mbox{if $w-\delta w + \ddot{w} \ge 
	\sqrt{\dot{w}^2+\ddot{w}^2}$} \\
\sin^{-1}\left( \frac{w-\delta w + \ddot{w} }
{\sqrt{\dot{w}^2+\ddot{w}^2}} \right) - \sin^{-1}\left( \frac{\ddot{w} } 
{\sqrt{\dot{w}^2+\ddot{w}^2}} \right) & \quad 
\mbox{if $w-\delta w + \ddot{w} \le 
	\sqrt{\dot{w}^2+\ddot{w}^2}$}.
\end{array}
\right.
\label{case3a}
\end{equation*}
\noindent{\it Case 4 ($\dot{w}>0$ and $\ddot{w}<0$)}:

Let 
$\beta = \sin^{-1} \left( \frac{-\ddot{w} }
{\sqrt{\dot{w}^2+\ddot{w}^2}} \right)$.
We can express $\dot{w}=\sqrt{\dot{w}^2+\ddot{w}^2}\cos(\beta)$ and
$\ddot{w}=-\sqrt{\dot{w}^2+\ddot{w}^2}\sin(\beta)$. Then, (\ref{alphai})
can be rewritten as 
\begin{equation}
w-\delta w + \ddot{w} \ge \sqrt{\dot{w}^2+\ddot{w}^2}
\sin(\alpha - \beta).
\label{alphai2}
\end{equation}
If $\ddot{w} + w- \delta w \ge \sqrt{\dot{w}^2+\ddot{w}^2}$, then $w\AN{\alpha} \ge \delta w$ holds for any $\alpha \in [0, \frac{\pi}{2}]$.
If $\ddot{w} +w- \delta w \le \sqrt{\dot{w}^2+\ddot{w}^2}$,  
to meet (\ref{alphai2}), we must have 
$\sin(\alpha - \beta) \le \frac{w-\delta w + \ddot{w}}
{\sqrt{\dot{w}^2+\ddot{w}^2}}$, or
$\alpha - \beta \le \sin^{-1}\left( \frac{w-\delta w + \ddot{w} }
{\sqrt{\dot{w}^2+\ddot{w}^2}} \right)$. Therefore,
\begin{equation*}
\alpha_{w} = \left\{
\begin{array}{ll}
\frac{\pi}{2} & \quad \mbox{if $w-\delta w + \ddot{w} \ge 
	\sqrt{\dot{w}^2+\ddot{w}^2}$} \\
\sin^{-1}\left( \frac{w-\delta w + \ddot{w} }
{\sqrt{\dot{w}^2+\ddot{w}^2}} \right) + \sin^{-1}\left( \frac{-\ddot{w} }
{\sqrt{\dot{w}^2+\ddot{w}^2}} \right) & \quad 
\mbox{if $w-\delta w + \ddot{w} \le 
	\sqrt{\dot{w}^2+\ddot{w}^2}$}.
\end{array}
\right.
\label{case4a}
\end{equation*}
\noindent{\it Case 5 ($\dot{w}<0$ and $\ddot{w}<0$)}:

Let $
\beta = \sin^{-1} \left( \frac{-\ddot{w} }
{\sqrt{\dot{w}^2+\ddot{w}^2}} \right)$.
We can express
 $\dot{w}=-\sqrt{\dot{w}^2+\ddot{w}^2}\cos(\beta)$ and
$\ddot{w}=-\sqrt{\dot{w}^2+\ddot{w}^2}\sin(\beta)$. Then, (\ref{alphai})
can be rewritten as 
\begin{equation}
w-\delta w + \ddot{w} \ge -\sqrt{\dot{w}^2+\ddot{w}^2}
\sin(\alpha +\beta),
\label{alphai4}
\end{equation}
If $\ddot{w} +( w- \delta w) \ge 0$, then
$w\AN{\alpha} \ge \delta w$ holds for any $\alpha \in [0, \frac{\pi}{2}]$.
If $\ddot{w} +( w- \delta w) \le 0$,  
to meet (\ref{alphai4}), we must have 
$\sin(\alpha + \beta) \ge \frac{-(w-\delta w + \ddot{w})}
{\sqrt{\dot{w}^2+\ddot{w}^2}}$, or
$\alpha + \beta \le \pi - \sin^{-1} \left( \frac{-(w-\delta w + 
	\ddot{w})}{\sqrt{\dot{w}^2+\ddot{w}^2}} \right)$. Therefore,
\begin{equation*}
\alpha_{w} = \left\{
\begin{array}{ll}
\frac{\pi}{2} & \quad \mbox{if $w-\delta w + \ddot{w} \ge 0$} \\
\pi - \sin^{-1} \left( \frac{-(w-\delta w + \ddot{w}) }
{\sqrt{\dot{w}^2+\ddot{w}^2}} \right) - \sin^{-1}\left( \frac{-\ddot{w} }
{\sqrt{\dot{w}^2+\ddot{w}^2}} \right) 
& \quad \mbox{if $w-\delta w + \ddot{w} \le 0$}.
\end{array}
\right.
\label{case5a}
\end{equation*}
\noindent{\it Case 6 ($\dot{w}<0$ and $\ddot{w}>0$)}:

Clearly (\ref{alphai}) always holds for any $\alpha \in [0, \frac{\pi}{2}]$. 
Therefore, we can take \begin{equation}
\alpha_{w} = \frac{\pi}{2}.
\end{equation}
\noindent{\it Case 7 ($\dot{w}=0$ and $\ddot{w}=0$)}:

Clearly (\ref{alphai}) always holds for any $\alpha \in [0, \frac{\pi}{2}]$. 
Therefore, we can take \begin{equation}
\alpha_{w} = \frac{\pi}{2}.
\end{equation}
Similar analysis can be performed for (\ref{key1b}), then similar analytical forms are derived for $\alpha_{s}$.

\section{Details on Numerical Results}\label{sec:detailedResults}

Tables~\ref{tb:solvable_3}, \ref{tb:solvable_4} and \ref{tb:solvable_5}
report the objective value, the numbers of iterations,
and the computation time (in seconds) of 
the proposed arc-search algorithm (Algorithm~\ref{mainAlgo1})
and the line-search algorithm (Algorithm~\ref{mainAlgo_line}) for QCQP and Other type problems.
The symbol ``Unattained'' indicates that the algorithms stopped prematurely, mainly because of the numerical errors. 
We excluded the problems that all the three algorithms 
(Algorithm~\ref{mainAlgo1}, Algorithm~\ref{mainAlgo1} with $\ddot{\ddot{v}}$, and Algorithm~\ref{mainAlgo_line})
stopped with ``Unattained''.
\begin{table}
	\footnotesize    
	\centering
	\caption{Results on QCQP problems\label{tb:solvable_3}}
	\begin{tabular}{|c||r|r|r||r|r|r||r|r|r|} \hline
		& \multicolumn{3}{|c||}{arc-search (Algorithm~\ref{mainAlgo1})} & \multicolumn{3}{|c||}{line-search (Algorithm~\ref{mainAlgo_line})} & \multicolumn{3}{|c|}{arc-search with $\ddot{\ddot{v}}$ in (\ref{ddotddotv})} \\ \hline
		Problem & Obj & Iter & Time & Obj & Iter & Time & Obj & Iter & Time  \\ \hline
BT12 & 6.1881 & 4 & 0.014 & 6.1881 & 20 & 0.009 & 6.1881 & 3 & 0.005 \\ \hline 
TRY-B & 0.0000 & 12 & 0.009 & 1.0000 & 10 & 0.004 & 0.0000 & 18 & 0.011 \\ \hline 
BT1 & -1.0001 & 11 & 0.017 & -0.9937 & 21 & 0.013 & -0.9991 & 8 & 0.011 \\ \hline 
BT2 & 0.0326 & 10 & 0.006 & 0.0326 & 22 & 0.009 & 0.0326 & 11 & 0.006 \\ \hline 
BT4 & 4.6075 & 8 & 0.007 & 4.6075 & 20 & 0.008 & 4.6075 & 5 & 0.003 \\ \hline 
BT5 & 967.6665 & 6 & 0.005 & 961.7151 & 22 & 0.009 & 961.7152 & 5 & 0.003 \\ \hline 
BT8 & 1.0000 & 4 & 0.009 & 1.0000 & 19 & 0.022 & 1.0001 & 7 & 0.012 \\ \hline 
HS108 & -0.8661 & 9 & 0.011 & -0.5000 & 22 & 0.013 & \multicolumn{3}{c|}{Unattained} \\ \hline 
HS113 & 24.3061 & 13 & 0.012 & 24.3058 & 11 & 0.006 & 24.3059 & 9 & 0.005 \\ \hline 
HS12 & -30.0000 & 8 & 0.022 & -30.0001 & 15 & 0.017 & -30.0000 & 12 & 0.020 \\ \hline 
HS22 & 0.9999 & 6 & 0.005 & 0.9999 & 5 & 0.002 & 1.0000 & 5 & 0.003 \\ \hline 
HS30 & 0.9999 & 10 & 0.008 & 0.9999 & 9 & 0.004 & 0.9999 & 10 & 0.006 \\ \hline 
HS31 & 5.9994 & 10 & 0.008 & 5.9993 & 9 & 0.004 & 5.9994 & 11 & 0.007 \\ \hline 
HS43 & -44.0003 & 8 & 0.007 & -44.0002 & 11 & 0.006 & -44.0003 & 9 & 0.006 \\ \hline 
HS63 & 961.7152 & 9 & 0.008 & 961.7151 & 7 & 0.003 & 961.7152 & 10 & 0.006 \\ \hline 
HS65 & 0.9535 & 12 & 0.010 & 0.9535 & 10 & 0.006 & 0.9535 & 15 & 0.010 \\ \hline 
HS83 & -30670.0988 & 20 & 0.018 & -30670.0999 & 21 & 0.010 & -30670.0991 & 21 & 0.013 \\ \hline 
MARATOS & -1.0000 & 3 & 0.002 & -1.0000 & 14 & 0.006 & -1.0000 & 3 & 0.002 \\ \hline 
OPTPRLOC & \multicolumn{3}{c||}{Unattained} & \multicolumn{3}{c||}{Unattained} & -16.4211 & 44 & 0.077 \\ \hline 
ORTHREGB & 0.0000 & 1 & 0.002 & 0.0000 & 26 & 0.016 & 0.0000 & 1 & 0.001 \\ \hline 
ZECEVIC3 & 97.3087 & 9 & 0.006 & 97.3086 & 10 & 0.005 & 97.3087 & 9 & 0.005 \\ \hline 
ZECEVIC4 & 7.5574 & 9 & 0.006 & 7.5575 & 7 & 0.003 & 7.5575 & 8 & 0.004 \\ \hline 
HS11 & -8.4988 & 7 & 0.005 & -8.4985 & 13 & 0.006 & -8.4987 & 8 & 0.004 \\ \hline 
HS14 & 1.3933 & 5 & 0.015 & 1.3934 & 9 & 0.011 & 1.3934 & 6 & 0.011 \\ \hline 
HS18 & 5.0000 & 11 & 0.008 & 5.0000 & 14 & 0.007 & 5.0000 & 14 & 0.008 \\ \hline 
HS27 & \multicolumn{3}{c||}{Unattained} & 0.0400 & 24 & 0.012 & 0.0400 & 27 & 0.023 \\ \hline 
HS42 & 13.8579 & 3 & 0.002 & 13.8579 & 19 & 0.008 & 13.8579 & 3 & 0.002 \\ \hline 
HS57 & 0.0306 & 9 & 0.007 & 0.0285 & 16 & 0.012 & 0.0305 & 15 & 0.009 \\ \hline 
BT13 & -0.0001 & 10 & 0.026 & -0.0001 & 17 & 0.026 & \multicolumn{3}{c|}{Unattained} \\ \hline 
CONGIGMZ & \multicolumn{3}{c||}{Unattained} & \multicolumn{3}{c|}{Unattained} & 27.9991 & 20 & 0.011 \\ \hline 
GIGOMEZ1 & -2.9999 & 40 & 0.036 & -3.0000 & 421 & 0.569 & -3.0001 & 72 & 0.093 \\ \hline 
HAIFAM & -45.0004 & 287 & 14.955 & -45.0004 & 1000 & 9.007 & -45.0003 & 1000 & 14.112 \\ \hline 
HAIFAS & -0.4499 & 5 & 0.015 & -0.4501 & 20 & 0.026 & -0.4499 & 6 & 0.011 \\ \hline 
HS10 & -1.0000 & 7 & 0.005 & -1.0001 & 8 & 0.004 & -1.0000 & 9 & 0.005 \\ \hline 
MAKELA1 & -1.4143 & 34 & 0.032 & -1.4143 & 107 & 0.109 & -1.4143 & 17 & 0.014 \\ \hline 
MAKELA2 & 7.1999 & 7 & 0.005 & 7.2000 & 7 & 0.003 & 7.2000 & 7 & 0.004 \\ \hline 
MAKELA3 & 0.0006 & 12 & 0.019 & 0.0000 & 19 & 0.013 & \multicolumn{3}{c|}{Unattained} \\ \hline 
MIFFLIN1 & -0.9999 & 5 & 0.003 & -1.0001 & 6 & 0.003 & -1.0000 & 5 & 0.003 \\ \hline 
MIFFLIN2 & -1.0000 & 7 & 0.005 & -1.0001 & 10 & 0.005 & -1.0001 & 13 & 0.008 \\ \hline 
MINMAXRB & -0.0001 & 332 & 0.331 & -0.0001 & 11 & 0.006 & 0.0000 & 10 & 0.007 \\ \hline 
POLAK4 & \multicolumn{3}{c||}{Unattained} & \multicolumn{3}{c||}{Unattained} & -0.0001 & 365 & 0.388 \\ \hline 
PRODPL0 & 58.7752 & 33 & 0.225 & 58.7769 & 14 & 0.024 & 58.7759 & 21 & 0.058 \\ \hline 
PRODPL1 & 35.7313 & 28 & 0.188 & 35.7281 & 13 & 0.022 & 35.7298 & 17 & 0.048 \\ \hline 
ROSENMMX & -44.0000 & 10 & 0.007 & -44.0001 & 15 & 0.007 & -43.9999 & 10 & 0.005 \\ \hline 
SMMPSF & 1032924.7420 & 31 & 192.294 & 1032924.7330 & 68 & 36.244 & 1032924.7420 & 30 & 29.892 \\ \hline 
SWOPF & 0.0679 & 26 & 0.333 & 0.0679 & 26 & 0.047 & 0.0679 & 19 & 0.051 \\ \hline 
TRUSPYR1 & 11.2255 & 8 & 0.020 & 11.2254 & 12 & 0.014 & 11.2256 & 8 & 0.014 \\ \hline 
TRUSPYR2 & 11.2203 & 9 & 0.009 & 11.2200 & 24 & 0.017 & 11.2204 & 12 & 0.009 \\ \hline 
COOLHANS & 0.0000 & 5 & 0.006 & 0.0000 & 20 & 0.011 & 0.0000 & 8 & 0.005 \\ \hline 
GOTTFR & 0.0000 & 6 & 0.005 & 0.0000 & 18 & 0.010 & 0.0000 & 5 & 0.003 \\ \hline 
HIMMELBC & 0.0000 & 7 & 0.005 & 0.0000 & 21 & 0.009 & 0.0000 & 4 & 0.002 \\ \hline 
HIMMELBE & 0.0000 & 2 & 0.002 & 0.0000 & 18 & 0.007 & 0.0000 & 2 & 0.001 \\ \hline 
HYPCIR & 0.0000 & 4 & 0.003 & 0.0000 & 18 & 0.008 & 0.0000 & 4 & 0.002 \\ \hline 
HS8 & -1.0000 & 6 & 0.004 & -1.0000 & 21 & 0.010 & -1.0000 & 4 & 0.002 \\ \hline 
	\end{tabular}
\end{table}

\begin{table}
	\footnotesize
	\centering
	\caption{Results on  Others \label{tb:solvable_4}}
	\begin{tabular}{|c||r|r|r||r|r|r||r|r|r|} \hline
    & \multicolumn{3}{|c||}{arc-search (Algorithm~\ref{mainAlgo1})} & \multicolumn{3}{|c||}{line-search (Algorithm~\ref{mainAlgo_line})} & \multicolumn{3}{|c|}{arc-search with $\ddot{\ddot{v}}$ in (\ref{ddotddotv})} \\ \hline
    Problem & Obj & Iter & Time & Obj & Iter & Time & Obj & Iter & Time  \\ \hline
ACOPR30 & 576.8530 & 22 & 1.035 & 576.8530 & 122 & 0.805 & 576.8513 & 37 & 0.438 \\ \hline 
ACOPR30 & \multicolumn{3}{c||}{Unattained} & \multicolumn{3}{c||}{Unattained} & 576.8530 & 956 & 11.482 \\ \hline 
ACOPR57 & 41737.7220 & 271 & 53.130 & 41737.7230 & 107 & 2.513 & 41737.7231 & 30 & 1.091 \\ \hline 
ARGAUSS & 0.0000 & 1 & 0.009 & 0.0000 & 1 & 0.007 & 0.0000 & 1 & 0.008 \\ \hline 
BA-L1 & 0.0000 & 4 & 0.036 & 0.0000 & 23 & 0.040 & 0.0000 & 3 & 0.008 \\ \hline 
BA-L1SP & 0.0000 & 9 & 0.139 & 0.0000 & 24 & 0.063 & 0.0000 & 6 & 0.022 \\ \hline 
BT6 & 0.2770 & 7 & 0.006 & 0.2770 & 18 & 0.008 & 0.2770 & 10 & 0.006 \\ \hline 
BT7 & 306.5000 & 26 & 0.026 & 403.9997 & 30 & 0.015 & 360.3798 & 12 & 0.008 \\ \hline 
BT9 & -1.0000 & 16 & 0.015 & -1.0000 & 28 & 0.013 & -1.0000 & 12 & 0.007 \\ \hline 
BT10 & -1.0000 & 4 & 0.003 & -1.0000 & 18 & 0.007 & -1.0000 & 6 & 0.003 \\ \hline 
BT11 & 0.8249 & 6 & 0.005 & 0.8249 & 21 & 0.009 & 0.8249 & 7 & 0.004 \\ \hline 
CANTILVR & \multicolumn{3}{c||}{Unattained} & \multicolumn{3}{c||}{Unattained} & 1.3399 & 11 & 0.007 \\ \hline 
CB2 & 1.9523 & 6 & 0.004 & 1.9521 & 8 & 0.004 & 1.9522 & 7 & 0.004 \\ \hline 
CB3 & 2.0000 & 8 & 0.006 & 1.9999 & 9 & 0.005 & 2.0000 & 7 & 0.004 \\ \hline 
CHACONN1 & 1.9522 & 9 & 0.006 & 1.9521 & 7 & 0.003 & 1.9522 & 6 & 0.003 \\ \hline 
CHACONN2 & 1.9999 & 7 & 0.005 & 1.9999 & 8 & 0.004 & 2.0000 & 7 & 0.004 \\ \hline 
CLUSTER & \multicolumn{3}{c||}{Unattained} & \multicolumn{3}{c||}{Unattained} & 0.0000 & 5 & 0.007 \\ \hline 
CUBENE & 0.0000 & 4 & 0.003 & 0.0000 & 47 & 0.032 & 0.0000 & 4 & 0.003 \\ \hline 
DIPIGRI & 680.6300 & 16 & 0.013 & 680.6299 & 15 & 0.007 & 680.6300 & 15 & 0.009 \\ \hline 
DIXCHLNG & 2471.8978 & 9 & 0.009 & 2471.8978 & 33 & 0.016 & 2471.8978 & 9 & 0.005 \\ \hline 
FLETCHER & \multicolumn{3}{c||}{Unattained} & \multicolumn{3}{c||}{Unattained} & 19.5232 & 14 & 0.017 \\ \hline 
HALDMADS & \multicolumn{3}{c||}{Unattained} & \multicolumn{3}{c||}{Unattained} & 0.0346 & 48 & 0.059 \\ \hline 
HATFLDF & 0.0000 & 6 & 0.005 & 0.0000 & 15 & 0.007 & \multicolumn{3}{c|}{Unattained} \\ \hline 
HATFLDG & 0.0000 & 18 & 0.052 & 0.0000 & 21 & 0.014 & 0.0000 & 5 & 0.006 \\ \hline 
HEART8 & 0.0000 & 117 & 0.584 & 0.0000 & 469 & 1.838 & 0.0000 & 447 & 2.072 \\ \hline 
HELIXNE & 0.0000 & 5 & 0.004 & 0.0000 & 25 & 0.011 & 0.0000 & 8 & 0.004 \\ \hline 
HIMMELBI & -1735.5689 & 20 & 0.496 & -1735.5698 & 18 & 0.090 & -1735.5689 & 20 & 0.180 \\ \hline 
HIMMELBK & 0.0517 & 17 & 0.042 & 0.0516 & 37 & 0.038 & 0.0516 & 58 & 0.087 \\ \hline 
HIMMELP4 & -8.1980 & 42 & 0.029 & -8.1980 & 20 & 0.010 & \multicolumn{3}{c|}{Unattained} \\ \hline 
HONG & 22.5711 & 11 & 0.010 & 22.5711 & 10 & 0.006 & 22.5711 & 8 & 0.006 \\ \hline 
HS100 & 680.6300 & 16 & 0.013 & 680.6299 & 15 & 0.007 & 680.6300 & 15 & 0.008 \\ \hline 
HS100LNP & \multicolumn{3}{c||}{Unattained} & \multicolumn{3}{c||}{Unattained} & 680.6301 & 7 & 0.014 \\ \hline 
HS100MOD & 678.6796 & 22 & 0.018 & 678.6795 & 14 & 0.007 & 678.6795 & 20 & 0.011 \\ \hline 
HS101 & 1808.9319 & 174 & 0.254 & 1808.9335 & 463 & 0.486 & \multicolumn{3}{c|}{Unattained} \\ \hline 
HS104 & 3.9502 & 10 & 0.010 & 3.9501 & 11 & 0.006 & 3.9502 & 10 & 0.007 \\ \hline 
HS111 & \multicolumn{3}{c||}{Unattained} & -45.8493 & 15 & 0.009 & -47.7612 & 16 & 0.014 \\ \hline 
HS111LNP & -43.1482 & 17 & 0.030 & -45.8490 & 19 & 0.009 & -45.8494 & 10 & 0.006 \\ \hline 
HS112 & -47.7611 & 25 & 0.027 & -47.7611 & 19 & 0.011 & -47.7611 & 28 & 0.022 \\ \hline 
HS114 & \multicolumn{3}{c||}{Unattained} & -1770.6936 & 27 & 0.016 & -1770.6934 & 30 & 0.024 \\ \hline 
HS119 & 244.8788 & 14 & 0.018 & 244.8790 & 12 & 0.010 & 244.8788 & 15 & 0.013 \\ \hline 
HS24 & -1.0000 & 9 & 0.007 & 0.0000 & 11 & 0.007 & -1.0001 & 8 & 0.004 \\ \hline 
HS26 & 0.0000 & 12 & 0.008 & 0.0000 & 21 & 0.009 & 0.0000 & 12 & 0.006 \\ \hline 
HS29 & -22.6275 & 7 & 0.005 & 0.0000 & 19 & 0.015 & -22.6274 & 6 & 0.004 \\ \hline 
HS32 & 0.9997 & 5 & 0.015 & 0.9996 & 6 & 0.009 & 0.9997 & 5 & 0.010 \\ \hline 
HS34 & \multicolumn{3}{c||}{Unattained} & -0.8341 & 45 & 0.021 & -0.8341 & 59 & 0.032 \\ \hline 
HS36 & -3300.2088 & 12 & 0.009 & -0.0001 & 10 & 0.005 & -3300.2088 & 12 & 0.008 \\ \hline 
HS37 & 0.0000 & 15 & 0.010 & -0.0001 & 11 & 0.005 & 0.0000 & 1000 & 0.492 \\ \hline 
HS39 & -1.0000 & 16 & 0.015 & -1.0000 & 28 & 0.013 & -1.0000 & 12 & 0.007 \\ \hline 
HS40 & -0.2500 & 3 & 0.002 & -0.2500 & 15 & 0.006 & -0.2500 & 3 & 0.002 \\ \hline 
HS41 & 1.9259 & 8 & 0.006 & 1.9259 & 6 & 0.003 & 1.9259 & 7 & 0.004 \\ \hline 
HS46 & \multicolumn{3}{c||}{Unattained} & \multicolumn{3}{c||}{Unattained} & 0.0000 & 12 & 0.006 \\ \hline 
HS49 & 0.0000 & 12 & 0.008 & 0.0000 & 26 & 0.011 & 0.0000 & 13 & 0.007 \\ \hline 
HS50 & 0.0000 & 4 & 0.003 & 0.0000 & 31 & 0.013 & 0.0000 & 8 & 0.004 \\ \hline 
	\end{tabular}
\end{table}

\begin{table}
	\footnotesize
	\centering
	\caption{Results on  Others (continued) \label{tb:solvable_5}}
	\begin{tabular}{|c||r|r|r||r|r|r||r|r|r|} \hline
    & \multicolumn{3}{|c||}{arc-search (Algorithm~\ref{mainAlgo1})} & \multicolumn{3}{|c||}{line-search (Algorithm~\ref{mainAlgo_line})} & \multicolumn{3}{|c|}{arc-search with $\ddot{\ddot{v}}$ in (\ref{ddotddotv})} \\ \hline
    Problem & Obj & Iter & Time & Obj & Iter & Time & Obj & Iter & Time  \\ \hline	
HS55 & 6.3333 & 6 & 0.013 & 6.3332 & 6 & 0.008 & 6.3333 & 6 & 0.010 \\ \hline 
HS56 & 0.0000 & 14 & 0.013 & 0.0000 & 20 & 0.009 & 0.0000 & 14 & 0.008 \\ \hline 
HS59 & -7.8027 & 42 & 0.031 & -7.8028 & 18 & 0.010 & -6.7495 & 204 & 0.151 \\ \hline 
HS60 & 0.0326 & 11 & 0.009 & 0.0326 & 11 & 0.006 & 0.0326 & 11 & 0.007 \\ \hline 
HS64 & \multicolumn{3}{c||}{Unattained} & \multicolumn{3}{c||}{Unattained} & 6299.6148 & 19 & 0.012 \\ \hline 
HS66 & \multicolumn{3}{c||}{Unattained} & \multicolumn{3}{c||}{Unattained} & 0.5182 & 60 & 0.032 \\ \hline 
HS68 & 0.0000 & 21 & 0.018 & 0.0000 & 10 & 0.005 & 0.0000 & 21 & 0.015 \\ \hline 
HS69 & 0.0040 & 49 & 0.046 & 0.0040 & 183 & 0.166 & 0.0040 & 52 & 0.042 \\ \hline 
HS7 & 1.7844 & 9 & 0.007 & 1.7844 & 27 & 0.016 & 1.7321 & 16 & 0.014 \\ \hline 
HS71 & \multicolumn{3}{c||}{Unattained} & \multicolumn{3}{c||}{Unattained} & 17.0139 & 16 & 0.018 \\ \hline 
HS73 & 29.8944 & 5 & 0.015 & 29.8943 & 7 & 0.009 & 29.8944 & 5 & 0.010 \\ \hline 
HS74 & 5126.4981 & 18 & 0.019 & 5126.4981 & 18 & 0.009 & 5126.4981 & 17 & 0.014 \\ \hline 
HS75 & 5174.1355 & 23 & 0.022 & 5174.1352 & 15 & 0.007 & 5174.1355 & 23 & 0.017 \\ \hline 
HS77 & 0.2415 & 8 & 0.006 & 0.2415 & 18 & 0.008 & 0.2415 & 8 & 0.004 \\ \hline 
HS78 & -2.9197 & 3 & 0.002 & -2.9197 & 20 & 0.008 & -2.9197 & 3 & 0.002 \\ \hline 
HS79 & 0.0788 & 3 & 0.003 & 0.0788 & 19 & 0.008 & 0.0788 & 4 & 0.002 \\ \hline 
HS86 & \multicolumn{3}{c||}{Unattained} & \multicolumn{3}{c||}{Unattained} & -32.3506 & 17 & 0.010 \\ \hline 
HS99 & -831079891.5000 & 8 & 0.007 & -831079891.5000 & 35 & 0.017 & -831079891.5000 & 8 & 0.005 \\ \hline 
HUBFIT & 0.0169 & 5 & 0.003 & 0.0169 & 5 & 0.003 & 0.0169 & 5 & 0.003 \\ \hline 
HYDCAR20 & 0.0000 & 16 & 0.495 & 0.0000 & 20 & 0.066 & 0.0000 & 8 & 0.042 \\ \hline 
HYDCAR6 & 0.0000 & 6 & 0.019 & 0.0000 & 22 & 0.017 & 0.0000 & 4 & 0.005 \\ \hline 
LAKES & 350525.0229 & 35 & 0.481 & 350524.9285 & 60 & 0.100 & 350525.0229 & 43 & 0.136 \\ \hline 
LEAKNET & 8.0448 & 55 & 3.482 & 8.0020 & 38 & 0.187 & 8.0449 & 41 & 0.362 \\ \hline 
LIN & -0.0176 & 5 & 0.004 & -0.0176 & 13 & 0.006 & -0.0176 & 5 & 0.003 \\ \hline 
LOADBAL & 0.4529 & 34 & 0.075 & 0.4531 & 26 & 0.025 & 0.4529 & 34 & 0.046 \\ \hline 
LOOTSMA & 8.0000 & 10 & 0.009 & 7.7990 & 1000 & 0.442 & 8.0000 & 18 & 0.010 \\ \hline 
LSNNODOC & 123.1027 & 18 & 0.029 & 123.1026 & 11 & 0.013 & 123.1027 & 23 & 0.030 \\ \hline 
MADSEN & 0.6164 & 8 & 0.006 & 0.6163 & 11 & 0.005 & \multicolumn{3}{c|}{Unattained} \\ \hline 
MATRIX2 & 0.0001 & 7 & 0.006 & 0.0001 & 11 & 0.007 & 0.0000 & 28 & 0.024 \\ \hline 
METHANB8 & 0.0000 & 2 & 0.007 & 0.0000 & 17 & 0.013 & 0.0000 & 2 & 0.003 \\ \hline 
METHANL8 & 0.0000 & 3 & 0.010 & 0.0000 & 23 & 0.018 & 0.0000 & 3 & 0.004 \\ \hline 
MINMAXBD & 115.7064 & 33 & 0.041 & 115.7064 & 50 & 0.028 & 115.7064 & 36 & 0.024 \\ \hline 
MWRIGHT & 42.0461 & 8 & 0.007 & 42.0461 & 24 & 0.010 & 42.0461 & 5 & 0.003 \\ \hline 
ODFITS & -2380.0268 & 48 & 0.038 & -2380.0268 & 20 & 0.010 & -2380.0268 & 14 & 0.008 \\ \hline 
POLAK1 & 2.7183 & 9 & 0.006 & 2.7182 & 13 & 0.006 & 2.7182 & 8 & 0.004 \\ \hline 
POLAK2 & 54.5982 & 7 & 0.006 & 54.5981 & 10 & 0.005 & 54.5982 & 7 & 0.004 \\ \hline 
POLAK3 & 5.9329 & 138 & 0.178 & 5.9329 & 12 & 0.006 & 5.9329 & 11 & 0.007 \\ \hline 
POLAK5 & 49.9999 & 46 & 0.038 & 49.9999 & 10 & 0.004 & 50.0000 & 7 & 0.004 \\ \hline 
POLAK6 & \multicolumn{3}{c||}{Unattained} & \multicolumn{3}{c||}{Unattained} & -44.0000 & 22 & 0.013 \\ \hline 
POWELLBS & 0.0000 & 6 & 0.004 & 0.0000 & 17 & 0.008 & 0.0000 & 40 & 0.031 \\ \hline 
RECIPE & 0.0000 & 5 & 0.004 & 0.0000 & 19 & 0.008 & 0.0000 & 9 & 0.005 \\ \hline 
RES & 0.0000 & 29 & 0.042 & 0.0000 & 18 & 0.011 & 0.0000 & 29 & 0.028 \\ \hline 
SINVALNE & 0.0000 & 7 & 0.005 & 0.0000 & 28 & 0.016 & 0.0000 & 3 & 0.002 \\ \hline 
SMBANK & -7129292.0000 & 55 & 2.151 & -7129292.0000 & 196 & 1.709 & -7129292.0000 & 64 & 0.834 \\ \hline 
SYNTHES1 & 0.7573 & 7 & 0.006 & 0.7573 & 7 & 0.004 & 0.7573 & 7 & 0.004 \\ \hline 
SYNTHES2 & -0.5639 & 9 & 0.012 & -0.5636 & 9 & 0.006 & -0.5638 & 9 & 0.008 \\ \hline 
SYNTHES3 & 15.0732 & 13 & 0.020 & 15.0733 & 10 & 0.007 & 15.0730 & 10 & 0.010 \\ \hline 
TRIGGER & 0.0000 & 6 & 0.020 & 0.0000 & 1000 & 4.509 & 0.0000 & 10 & 0.026 \\ \hline 
TRIMLOSS & \multicolumn{3}{c||}{Unattained} & 9.0559 & 142 & 1.778 & 9.0599 & 253 & 5.672 \\ \hline 
TWOBARS & 1.5086 & 6 & 0.004 & 1.5084 & 6 & 0.003 & 1.5086 & 6 & 0.003 \\ \hline 
WATER & 10549.3616 & 31 & 0.131 & 10549.3602 & 34 & 0.064 & 10549.3616 & 28 & 0.073 \\ \hline 
ZY2 & 7.8905 & 7 & 0.007 & 7.8904 & 7 & 0.005 & 7.8904 & 8 & 0.006 \\ \hline 
ACOPR118 & 129660.2236 & 203 & 323.214 & 129660.2294 & 54 & 10.248 & 129660.2319 & 32 & 10.623 \\ \hline 
ALSOTAME & 0.0821 & 8 & 0.005 & 0.0821 & 7 & 0.004 & 0.0821 & 7 & 0.004 \\ \hline 	\end{tabular}
\end{table}

\end{document}